\DeclareMathOperator{\one}{\mathbbm{1}} 
\theoremstyle{plain} 
    \newtheorem{theorem}{Theorem}
    \newtheorem{lemma}[theorem]{Lemma}
    \newtheorem{proposition}[theorem]{Proposition}
    \newtheorem{claim}[theorem]{Claim}
\theoremstyle{definition} 
    \newtheorem{remark}[theorem]{Remark}
\DeclareMathOperator{\R}{\mathbb{R}}
\DeclareMathOperator{\Q}{\mathbb{Q}}
\DeclareMathOperator{\Z}{\mathbb{Z}}
\DeclareMathOperator{\N}{\mathbb{N}}
\DeclareMathOperator{\De}{d}
\newcommand{\bR}{\mathbf{R}}
\newcommand{\I}{\mathcal I}
\newcommand{\f}{\frac}  
\newcommand{\eps}{\epsilon}
\newcommand{\eq}[1]{\begin{equation#1}}
\newcommand{\eeq}[1]{\end{equation#1}}
\newcommand{\eqa}[1]{\begin{eqnarray#1}}
\newcommand{\eeqa}[1]{\end{eqnarray#1}}
\newcommand{\vr}{\varphi}
\numberwithin{equation}{section}
\title{\vspace{-2cm}\textbf{Extremes of some Gaussian random interfaces} \vspace{0.5cm}}
\date{}
\author{Alberto Chiarini \thanks{Technische Universit\"at, Berlin, Germany\hfill  \texttt{chiarini@math.tu-berlin.de}}\qquad Alessandra Cipriani    \thanks{Weierstrass Institute, Berlin, Germany\hfill  \texttt{Alessandra.Cipriani@wias-berlin.de}}\qquad Rajat Subhra Hazra\thanks{ Indian Statistical Institute,
 Kolkata, India\hfill \texttt{rajatmaths@gmail.com}}}
\begin{document}
 \maketitle

 \begin{abstract}
In this article we give a general criterion for some dependent Gaussian models to belong to maximal domain of attraction of Gumbel, following an application of the Stein-Chen method studied in \cite{AGG}. We also show the convergence of the associated point process. As an application, we show the conditions are satisfied by some of the well-known supercritical Gaussian interface models, namely, membrane model, massive and massless discrete Gaussian free field, fractional Gaussian free field.
 \end{abstract}

\global\long\def\E{\mathsf{E}}

\global\long\def\Ex{\mathrm{E}}

\global\long\def\var#1{\mathsf{Var}\left[#1 \right]}

\global\long\def\cov#1{\mathsf{Cov}\left[#1 \right]}

\global\long\def\prob{\mathsf{P}}

\global\long\def\pro{\mathbb{P}}

\global\long\def\bprob{\mathbf{P}}

\global\long\def\vr{\varphi}

\global\long\def\Exp#1{\mathbb{E}\left[#1\right]}
\global\long\def\varp#1{\mathbb{V}\mathrm{ar}\left[#1 \right]}

\global\long\def\abs#1{\left\lvert #1\right\rvert }
 \global\long\def\sgn{\mathrm{{sgn\}}}}
 \global\long\def\e{\mathrm{e}}
$\global\long\def\O#1{\mathrm{O}\left(#1\right)}
$ $\global\long\def\o#1{\mathrm{o}\left(#1\right)}
$\global\long\def\la{}
 \global\long\def\f{\frac{}{}}
\global\long\def\R{\mathbb{R}}
\global\long\def\Z{\mathbb{Z}}
\global\long\def\Q{\mathbb{Q}}
\global\long\def\N{\mathbb{N}}
\global\long\def\De{\mathrm{d}}

\maketitle

\section{lntroduction}\label{sec: Introduction}
The main aim of this article is to study the fluctuations of the extremal process associated to a class of random interface models including the {\it discrete Gaussian free field} (DGFF) and the \textit{membrane model} (MM). These models have gained prominence in recent years due to links to other theories of statistical mechanics and probability such as the branching random walk, the branching Brownian motion, and the Schramm-Loewner evolution, just to name a few. It is known that in terms of infinite volume Gibbs measures these models undergo a phase transition: for instance, the DGFF has critical dimension two, that is, in $d\ge 3$ the associated infinite volume measure exists. After~\cite{BDG} determined the leading order of the maximum and~\cite{Daviaud} investigated the high points in $d=2$, the extremal process for the DGFF was studied at the critical dimension by \cite{BrDiZe, BisLou} and in the supercritical case it was handled recently in \cite{CCH2015, CCH2015b}. It was shown that the ideas used in critical DGFF can be extended to encapsulate many log-correlated models (see \citet[Theorem 1.3]{ding:roy:ofer}) so it is natural to ask similar questions for the supercritical case. The main aim of this article is to extend the ideas introduced in \cite{CCH2015} to derive a general criterion for Gaussian fields to belong to the maximal domain of attraction of the Gumbel distribution. We also check that these conditions hold for some known fields. The examples we consider here require bounds derived from the infinite volume Gibbs measure.

Criteria for one-dimensional Gaussian fields to belong to the maximal domain of attraction of the Gumbel distribution are classical (\citet[Section~3.3.3]{EmbKluMik}, \citet{Berman}). Extending them to a higher-dimensional setting is however non trivial. The main result of this article is Theorem~\ref{thm:zero}, which gives some conditions on the field that are relatively easy to check for the models we consider below. We use the Stein-Chen method for Poisson approximation and in particular interesting results from \cite{AGG}. While the DGFF case was dealt in \cite{CCH2015}, the massive DGFF in $d\ge 1$ is new and follows relatively easily. The model which is slightly more difficult to analyse (for various reasons which we will point out later) is the membrane model.  The membrane model undergoes a phase transition in $d=4$. Its extremal process has not been extensively studied yet. The first order behavior of the maxima in the critical dimension was determined in \cite{Kurt_d4} and the high points were studied in \cite{Cip13}. The fluctuations of the maxima are still an open question in any dimension. In this article we study the maxima and also the extremal point process in the supercritical dimensions (that is, in $d\ge 5$). We also study the fractional discrete Gaussian free field, namely the centered Gaussian field whose covariance is given by the Green's function of an isotropic random walk. In this case it is known that there are long-range interactions for the random walk in contrast to what happens for the DGFF, which has nearest-neighbour interactions. The entropic repulsion for this model was studied by~\cite{BDZ95}; see also~\cite{caputo_thesis} for further results on large deviations of the Gibbs measure.

\paragraph{Outline of the article}In Section~\ref{sec:extrema_Gauss} we state our main conditions and the results. In Theorem~\ref{thm:zero} we deal with maxima and give conditions for the scaling limit to be Gumbel. We show that the argument for maxima can be extended to prove that the extremal point processes converge to a Poisson random measure in Theorem~\ref{thm:zero:pp}. In Section~\ref{sec:examples} we give a brief introduction to the models which satisfy the conditions of Theorem~\ref{thm:zero}. The rest of the article is devoted to the proofs. In Section~\ref{sec:proof:maintheorem} we prove Theorem~\ref{thm:zero} and Theorem~\ref{thm:zero:pp}. In Section~\ref{sec:proof:examples} we show that the models satisfy the assumptions of the main theorems.

\subsection{Extremes of Gaussian fields}\label{sec:extrema_Gauss}
Denote by $V_N:=[0,\,n-1]^d\cap\Z^d$, $n\in \N$, the centered box of side-length $n$ and volume $N=n^d$, and given $\delta>0$, which we fix now for the rest of the paper, let us denote by $V_N^\delta$ to be the {\it bulk} of $V_N$, namely
$$
V_N^\delta:=\left\{\alpha\in V_N:\,\|\alpha-\gamma \|>\delta N^{1/d},\,\text{for all}\;\gamma\in \Z^d\setminus V_N\right\}.
$$
Let $\bprob_N$ and $\bprob$ be two centered Gaussian probability measures on $\R^{\Z^d}$. Let $(\vr_\alpha)_{\alpha\in \Z^d}$ be the canonical coordinate process for either $\bprob_N$ or $\bprob$. One can think for example of $\bprob_N$ as the finite volume measure. We indicate with $g_N(\cdot,\,\cdot)$ resp. $g(\cdot,\,\cdot)$ the covariance matrices for $\bprob_N$ resp. $\bprob$. We use the notation $g_N(\alpha):=g_N(\alpha,\,\alpha)$ and, for a stationary field with covariance matrix $g$, we indicate with a slight abuse of notation $g(\alpha):=g(0,\,\alpha)$.\\
In the following theorem $\mathbb P$ is either $\bprob_N$ or $\bprob$.
\begin{theorem}\label{thm:zero}
Let us define the centering and scaling by
\begin{equation}\label{eq:cs}
b_N:=\sqrt{g(0)}\left[\sqrt{2 \log N}-\frac{\log \log N+\log(4\pi)}{2\sqrt{2 \log N}}\right], \qquad a_N:=\frac{g(0)}{b_N}.
\end{equation}
We make the following assumptions:
\let\myenumi\theenumi
\let\mylabelenumi\labelenumi
\renewcommand{\theenumi}{A\myenumi}
\renewcommand{\labelenumi}{{\rm (\theenumi)}}
\begin{enumerate}
\item Let the measure $\bprob$ be stationary and $g(0)\in (0,\,+\infty)$. Assume further that $\lim_{\|\alpha\|\to+\infty}g(\alpha)= 0$.\label{item:decreasing}
\item We have that 
\[
g(\alpha,\,\beta)+\o{(\log N)^{-1}} \le g_N(\alpha,\,\beta)\le g(\alpha,\,\beta)+\o{(\log N)^{-1}}
\]
for $\alpha,\,\beta\in V_N^\delta$. Moreover $g_N(\alpha,\,\alpha)\le g(0,\,0)+\o{(\log N)^{-1}}$ for $\alpha\in V_N$. All the error terms are uniform in $\alpha,\,\beta$. \label{item:less}
\item Define  $\kappa:=1- \sup_{\alpha\in \Z^d} g(\alpha)/g(0)$. For all $\alpha\in V_N$, let $B_\alpha:=B\left(\alpha,\,s_N\right)$ be the ball around $\alpha$ of radius $s_N$, with $s_N$ satisfying $s_N=\o{ N^{\frac{\kappa}{d(2-\kappa)}}}$.
Let $K:=V_N\setminus B_\alpha$. Define $\mu_\alpha= \mathbb E[ \vr_\alpha|\mathcal F_K]$ where $\mathcal F_K=\sigma(\vr_\beta: \, \beta\in K)$. We have that
 $$\lim_{N\to\infty} \sup_{\alpha\in V_N^\delta}\varp{ \mu_\alpha}(\log N)^{2+\theta}=0 \;\text{ for some $\theta>0.$}$$\label{item:last}
\end{enumerate}
Then with the scaling of \eqref{eq:cs}
$$
\lim_{N\to +\infty}\pro\left(\frac{\max_{\alpha\in V_N}\varphi_\alpha-b_N}{a_N}<z\right)=\exp(-\mathrm{e}^{-z})
$$
uniformly for all $z\in \R$.
\end{theorem}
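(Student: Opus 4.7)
The overall plan is to apply the Stein--Chen Poisson approximation of \cite{AGG}. Set $u_N:=a_Nz+b_N$, introduce the exceedance indicators $X_\alpha:=\one\{\vr_\alpha>u_N\}$ and the counting variable $W_N:=\sum_{\alpha\in V_N}X_\alpha$, so that $\{\max_{\alpha\in V_N}\vr_\alpha\le u_N\}=\{W_N=0\}$. It therefore suffices to prove that $W_N$ converges in law to a $\mathrm{Poisson}(e^{-z})$ variable. The AGG bound with dependence neighbourhoods $B_\alpha=B(\alpha,s_N)$ gives
\[
d_{TV}\bigl(\mathcal L(W_N),\mathrm{Poisson}(\Exp{W_N})\bigr)\le b_1+b_2+b_3,
\]
so it remains to show $\Exp{W_N}\to e^{-z}$ and $b_1,b_2,b_3\to 0$.

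The first-moment convergence uses assumption (A2) combined with Mills' asymptotic $\bar\Phi(x)\sim\phi(x)/x$: substituting the definitions of $a_N,b_N$ from \eqref{eq:cs} and invoking the two-sided control of $g_N(\alpha,\alpha)$ yields $\pro(A_\alpha)=(1+o(1))e^{-z}/N$ uniformly on $V_N^\delta$, while the one-sided upper bound governs the boundary $V_N\setminus V_N^\delta$ (whose contribution is handled by a standard localisation argument showing the maximum lies in the bulk with high probability). The term $b_1=\sum_\alpha\sum_{\beta\in B_\alpha}\pro(A_\alpha)\pro(A_\beta)$ is then $O(s_N^d/N)=o(1)$ since $|B_\alpha|=O(s_N^d)$ and $\pro(A_\alpha)=O(1/N)$. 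For $b_2$, the bivariate Gaussian tail estimate together with the correlation bound $\sup_{\alpha\neq\beta}\rho_{\alpha\beta}\le 1-\kappa+o(1)$ extracted from (A1)--(A2) gives
\[
\pro(A_\alpha\cap A_\beta)\le C\exp\!\Bigl(-\tfrac{u_N^2}{g(0)(1+\rho_{\alpha\beta})}\Bigr)\le CN^{-2/(2-\kappa)},
\]
so that $b_2\le Cs_N^d N^{-\kappa/(2-\kappa)}=o(1)$ by exactly the growth constraint on $s_N$ in (A3); this is what dictates the shape of that assumption.

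The delicate step is $b_3=\sum_\alpha\Exp{|\pro(A_\alpha\mid\sigma(X_\beta:\beta\notin B_\alpha))-\pro(A_\alpha)|}$, which measures the approximate independence of $X_\alpha$ from the far-away exceedance indicators. Jensen's inequality allows one to enlarge the inner sigma-field to $\mathcal F_K$, and then the Gaussian orthogonal decomposition $\vr_\alpha=\mu_\alpha+Z_\alpha$ with $Z_\alpha$ independent of $\mathcal F_K$ produces $\pro(A_\alpha\mid\mathcal F_K)=\bar\Phi((u_N-\mu_\alpha)/\sigma_\alpha)$, where $\sigma_\alpha^2=g_N(\alpha,\alpha)-\varp{\mu_\alpha}$. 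I split the expectation on $\{|\mu_\alpha|\le v_N\}$, where a first-order Taylor expansion of $\bar\Phi$ bounds the difference by $\lesssim\phi(u_N/\sqrt{g(0)})\,|\mu_\alpha|\lesssim (\log N)^{1/2}|\mu_\alpha|/N$, and on its complement, where the Gaussian density of $\mu_\alpha$ is super-polynomially small because $\varp{\mu_\alpha}=o((\log N)^{-2-\theta})$ by (A3). Tuning $v_N$ slightly above $\sqrt{\varp{\mu_\alpha}\log N}$, both pieces of $b_3$ are $o(1)$ with the $\theta>0$ margin absorbing the $N$-fold summation; this is the main obstacle, since one must balance the fine variance control of $\mu_\alpha$ against the polynomial factor $\phi(u_N)\sim\sqrt{\log N}/N$ appearing $N$ times. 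Once $b_1,b_2,b_3\to 0$, Poisson approximation yields $\pro(W_N=0)\to e^{-e^{-z}}$, which is the claimed Gumbel limit; uniformity in $z\in\R$ follows automatically from the continuity of the limiting distribution function.
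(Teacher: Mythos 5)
Your proposal follows essentially the same route as the paper's proof: Stein--Chen with neighbourhoods $B(\alpha,s_N)$, Mills-ratio bounds for $b_1$, the Savage-type bivariate tail bound with the correlation gap $\kappa$ for $b_2$, and for $b_3$ the enlargement to $\mathcal F_K$, the orthogonal decomposition $\vr_\alpha=\mu_\alpha+\psi_\alpha$, and a split on $\{|\mu_\alpha|\le v_N\}$ with the same tuning of the threshold. The only steps you gloss over, which the paper treats separately, are the strict positivity of $\kappa$ (a short but non-trivial claim using stationarity and $g(\alpha)\to 0$ to rule out $\sup_{\alpha\ne 0}g(\alpha)/g(0)=1$, without which the $b_2$ bound is vacuous) and the bookkeeping between $|V_N^\delta|=m_N$ and $N$ when restricting to the bulk, which the paper resolves via a convergence-of-types lemma before letting $\delta\to 0$.
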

\begin{remark}
\begin{enumerate}
\item In Condition~\eqref{item:last} it not clear a priori that $\kappa$ is strictly greater than 0; as a part of the proof we will show that $\kappa\in (0,\,1]$ under Assumptions~\eqref{item:decreasing}-\eqref{item:decreasing}.
\item Condition~\eqref{item:less} is only needed in the finite volume case, that is, for the convergence of the maximum under $\bprob_N$.
\item In most of the examples we will deal with later, $s_N$ in Condition~\eqref{item:last} will be of order $ (\log N)^{T}$ for some $T>0$ and hence $s_N=\o{ N^{\frac{\kappa}{d(2-\kappa)}}}$ will be automatically satisfied.
\end{enumerate}
\end{remark}
\begin{remark}[Open question]
The extension to non-Gaussian fields is an interesting point one may raise. We are relying on the Gaussian structure when we apply Lemma~\ref{corol:conditional}, and also when we require stationarity in the form of covariance stationarity. We may guess that in the non-Gaussian setting one assumes that the field can be written as sum of two independent subfields with a suitable decay of covariances, that it is stationary in a strong sense and of bounded variances. Under these hypotheses the result still holds, with some amount of work. More general random walk representations (Helffer-Sj{\"o}strand) could help for example in the case of convex potentials.
\end{remark}

Our next result extends the convergence of the maximum to point process convergence. It is well-known that from the point process convergence one can derive the maximum, however we would still like to study the latter alone at first, as the estimates used in the proof are easier to present and can easily be implemented in the convergence of point processes. Let $E=[0,1]^d\times (-\infty,+\infty]$ and $V_N$ as in the beginning of this Subsection. Let $\varepsilon_x(\cdot)$, $x\in E$, be the point measure that gives mass one to a set containing $x$ and zero otherwise.

\begin{theorem}\label{thm:zero:pp}
Let $(\vr_\alpha)_{\alpha\in \Z^d}$ be a centered Gaussian field as in Theorem~\ref{thm:zero} satisfying Assumptions \eqref{item:decreasing}--\eqref{item:last}; then if $a_N$ and $b_N$ are as in \eqref{eq:cs},
\begin{equation}\label{eq:def:pp:general}
\eta_n(\cdot) :=\sum_{\alpha \in V_N} \varepsilon_{\left(\frac{\alpha}{n}, \frac{\vr_\alpha-b_N}{a_N}\right)}(\cdot)\overset{d}\rightarrow \eta
\end{equation}
where $\eta$ is a Poisson random measure on $E$ with intensity measure given by $\De t \otimes\left( \mathrm{e}^{-z} \De z\right)$, $ \De t \otimes \De z$ is the Lebesgue measure on $E$ and $\overset{d}\rightarrow $ is the convergence in distribution on $\mathcal M_p(E)$, the set of (Radon) point measures on $E$ endowed with the topology of vague convergence.
\end{theorem}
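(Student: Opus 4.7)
The plan is to invoke Kallenberg's theorem for convergence of point processes to a Poisson random measure on $E$. Since the limiting intensity $\De t \otimes \mathrm{e}^{-z}\De z$ is Radon and diffuse, $\eta$ is a.s.\ simple, so it suffices to verify on a dissecting $\pi$-system of relatively compact sets that both intensities and void probabilities converge. I take
\[
\mathcal{R} := \bigl\{\, R = A \times (z, +\infty] \, : \, A \subset [0,1]^d \text{ is a finite union of half-open rectangles},\ z \in \R \,\bigr\},
\]
whose elements are a.s.\ continuity sets of $\eta$ and whose finite unions generate the Borel $\sigma$-algebra of $E$. I must therefore show, for every such $R$,
\[
\Ex[\eta_n(R)] \longrightarrow |A|\,\mathrm{e}^{-z}, \qquad \pro(\eta_n(R) = 0) \longrightarrow \exp\bigl(-|A|\,\mathrm{e}^{-z}\bigr).
\]

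For the intensity convergence I expand $\Ex[\eta_n(R)] = \sum_{\alpha \in V_N,\,\alpha/n \in A} \pro(\vr_\alpha > b_N + a_N z)$ and apply the Mills ratio asymptotic to the centered Gaussian $\vr_\alpha$ whose variance $g_N(\alpha,\alpha)$ satisfies $g_N(\alpha,\alpha) = g(0) + \o{(\log N)^{-1}}$ uniformly on $V_N$ by Assumption~\eqref{item:less}. Together with the choice of $b_N, a_N$ in \eqref{eq:cs}, a short computation gives the classical equivalence $\pro(\vr_\alpha > b_N + a_N z) = (1+o(1))\,\mathrm{e}^{-z}/N$ uniformly in $\alpha$. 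Summing over the $(1+o(1))N|A|$ indices $\alpha$ with $\alpha/n \in A$ (using Jordan measurability of the rectangles making up $A$) delivers the target.

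For the void probabilities I set $I_N := \{\alpha \in V_N : \alpha/n \in A\}$, $X_\alpha := \one\{\vr_\alpha > b_N + a_N z\}$ and $W_N := \sum_{\alpha \in I_N} X_\alpha$, so that $\pro(\eta_n(R) = 0) = \pro(W_N = 0)$. I then apply the Chen--Stein bound of Arratia--Goldstein--Gordon \cite{AGG} with the dependency neighbourhoods $B_\alpha = B(\alpha, s_N)$ supplied by Assumption~\eqref{item:last}, which yields $d_{TV}(\mathcal{L}(W_N), \mathrm{Poi}(\Ex[W_N])) \le b_1 + b_2 + b_3$ with the standard expressions. This is structurally the same quantity bounded in the proof of Theorem~\ref{thm:zero}: only the index set has shrunk from $V_N$ to $I_N \subset V_N$, and the exceedance level is $b_N + a_N z$ instead of $b_N$, which changes per-site tail probabilities only by the bounded factor $\mathrm{e}^{-z}$. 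Hence the estimates on $b_1$ (a volume count using $|B_\alpha| = O(s_N^d)$ with $s_N$ as in Assumption~\eqref{item:last}), on $b_2$ (a two-point Gaussian tail fed by the covariance decay of Assumption~\eqref{item:decreasing}), and on $b_3$ (via the Gaussian identity for the conditional mean $\mu_\alpha$ and the variance bound of Assumption~\eqref{item:last}) transport verbatim and force $d_{TV} \to 0$. Combining with the intensity step gives $\pro(W_N = 0) = \mathrm{e}^{-\Ex[W_N]} + o(1) \to \exp(-|A|\mathrm{e}^{-z})$, and Kallenberg's criterion closes the argument.

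I expect the main obstacle, as already for Theorem~\ref{thm:zero}, to be the control of $b_2$ when one or both of the paired indices sit in the boundary layer $V_N \setminus V_N^\delta$, since the off-diagonal covariance bound of Assumption~\eqref{item:less} is available only in the bulk. The remedy is the same Slepian-type comparison used for the single maximum: the uniform diagonal bound $g_N(\alpha,\alpha) \le g(0) + \o{(\log N)^{-1}}$ valid on all of $V_N$, combined with a monotone comparison against the infinite-volume stationary field $\bprob$, upgrades the two-point tail estimate to a uniform one on $V_N \times V_N$. Once this is in place the extension from the single maximum to the full point process is essentially a matter of re-reading the proof of Theorem~\ref{thm:zero} with $V_N$ replaced by an arbitrary rectangular subfamily $I_N$.
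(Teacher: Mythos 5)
Your bulk argument (Kallenberg's criterion, intensity via Mills ratio, void probabilities via Chen--Stein with neighbourhoods $B(\alpha,s_N)$) is exactly the engine of the paper's proof. The genuine gap is your treatment of the boundary layer $V_N\setminus V_N^\delta$, in two places. First, the intensity step: you claim $\pro(\vr_\alpha>b_N+a_Nz)=(1+o(1))\mathrm{e}^{-z}/N$ \emph{uniformly in $\alpha\in V_N$}, but Assumption~\eqref{item:less} gives only the upper bound $g_N(\alpha,\alpha)\le g(0)+\o{(\log N)^{-1}}$ outside the bulk; there is no matching lower bound on the variance there, and for the concrete models (MM, DGFF with Dirichlet conditions) the variance degenerates near $\partial V_N$, so the two-sided asymptotic is false for a positive fraction of sites. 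Second, your proposed remedy for $b_2$ at the boundary does not work: to upper-bound the joint exceedance probability you need the correlation $g_N(\alpha,\beta)/g_N(\alpha,\alpha)$ bounded away from $1$ (the role of $\kappa$), and no off-diagonal information whatsoever is assumed outside $V_N^\delta$; a Slepian comparison goes the wrong way (larger correlations \emph{increase} joint exceedance probabilities), and the diagonal bound alone cannot rule out near-perfect correlation of neighbouring boundary sites. The paper circumvents both problems by never verifying Kallenberg on all of $V_N$: it first shows $d_p(\eta_n,\eta_n^\delta)\to 0$ as $\delta\to0$ using only the one-point upper bound (so the boundary layer contributes at most $C(1-(1-2\delta)^d)\mathrm{e}^{-z_0}$ in expectation), proves $\eta_n^\delta\Rightarrow\eta^\delta$ with intensity restricted to $[\delta,1-\delta]^d$, and then passes to $\delta\to0$ by a ``convergence together'' theorem. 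You need this three-step structure (or an additional hypothesis at the boundary); your uniform upgrade is not available from \eqref{item:decreasing}--\eqref{item:last}.

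A secondary, fixable issue: Kallenberg's criterion requires void probabilities on \emph{finite unions} of basic sets, and your class $\mathcal R$ of single products $A\times(z,+\infty]$ is not closed under unions. The paper handles joint void probabilities $\pro(\eta_n^\delta(A_1\times R_1)=0,\ldots,\eta_n^\delta(A_k\times R_k)=0)$ via the multivariate Chen--Stein bound (Theorem~\ref{thm:AGG2}); alternatively you could apply the univariate bound to the total count over the union, but this must be stated, not omitted.
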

\begin{remark}
For the membrane model, the correct state space is in fact $E=[-1/2,\,1/2]^d\times (-\infty,\,+\infty]$ due to the choice of $V_N$. The computations can be carried through without any issue.
\end{remark}

The main idea of the proof is
\begin{itemize}
\item[$\spadesuit$] to reduce ourselves to show convergence in the bulk of $V_N$, that is, sufficiently inside the box so as to neglect possible effects of the boundary;
\item[$\spadesuit$] secondly, we will use a convergence-of-types theorem to show that ``forgetting'' the random variables close to the boundary does not affect the scaling limit;
\item[$\spadesuit$] finally, the core of the proof consists in applying a version of the Stein-Chen method that will allow us to control the total variation distance between the law of the maximum and the Gumbel distribution.
\end{itemize}

\section{Examples}\label{sec:examples}

In this section we give some examples which satisfy the conditions of Theorem~\ref{thm:zero}. We
will handle
\begin{itemize}
\item[$\spadesuit$] the membrane model in $d\ge 5$,
\item[$\spadesuit$] the massive Gaussian free field in $d\ge 1$,
\item[$\spadesuit$] the discrete fractional free field with index $s$, $s\in (0,\,\min\left\{2,\,d\right\})$,
\item[$\spadesuit$] the discrete Gaussian free field in $d\ge 3$.
\end{itemize}
For convenience we are going to recall some of the basic properties of these models which will be helpful for proving Assumption~\eqref{item:decreasing}--\eqref{item:last}. Note that the last case was treated in \cite{CCH2015} and since this work extends it, the supercritical DGFF meets these conditions. We do not discuss this model in further details here.
\subsection{The membrane model}\label{subsec:MM}
In this section we briefly recall the definition of the membrane model (see also \cite{Sakagawa,Kurt_thesis}). Let $V_N:=[-n/2,\,n/2]^d\cap \Z^d$. The membrane model is the random interface with zero boundary conditions outside $V_N$ whose distribution is given by
\begin{equation}
 \label{def:field}
\mathbf P_N(\De\varphi)=\frac{1}{Z_N}\exp\left(-\frac{1}{2}\sum_{\alpha\in \Z^d}(\Delta\varphi_\alpha)^2\right)\prod_{\alpha\in V_N}\De\varphi_\alpha
\prod_{\alpha\in V_N^{\mathrm c}}\delta_0(\De\varphi_\alpha),
\end{equation}
where $\Delta$ is the discrete Laplacian. Here $Z_N$ is the normalizing constant. Note that by re-summation, the law $\mathbf P_N$ of the field is the law of the centered Gaussian field on $V_N$ with covariance matrix
$$G_N(\alpha,\beta)=G_{V_N}(\alpha,\beta):=\cov{\varphi_\alpha,\varphi_\beta}=\left(\Delta_N^2\right)^{-1}(\alpha,\beta).$$
Here, $\Delta_N^2(\alpha,\beta)=\Delta^2(\alpha,\beta)\one_{\{\alpha,\,\beta\in V_N\}}$ is the Bilaplacian with 0-boundary conditions outside $V_N$. It can also be seen as a Gaussian field on $V_N$ whose covariance matrix $G_N$ satisfies, for $\alpha\in V_N,$
$$\left\{\begin{array}{lr}
\Delta^2 G_N(\alpha,\beta)=\delta(\alpha,\, \beta),& \beta \in V_N\label{eq:uno} \\
G_N(\alpha,\beta)=0, & \beta \in \partial_2 V_N.\nonumber
\end{array}\right.$$
Here we denote $\delta(\alpha,\,\cdot)$ the Dirac mass at $\alpha$ and $\partial_2V_N:=\{\beta\in V_N^{\mathrm c}:\,\exists \gamma\in V_N:\,  \|\beta-\gamma\|\leq 2\}$. It is known that in $d\ge 5$ there exists $\bprob$ on $\R^{\Z^d}$ such that $\bprob_N\to \bprob$ in the weak topology of probability measures (\citet[Proposition 1.2.3] {Kurt_thesis}).  Under $\bprob$, the canonical coordinates $(\vr_\alpha)_{\alpha\in \Z^d}$ form a centered Gaussian process with covariance given by
$$G(\alpha,\beta)= \Delta^{-2}(\alpha,\beta)= \sum_{\gamma\in \Z^d} \Delta^{-1}(\alpha,\gamma)\Delta^{-1}(\gamma,\beta)= \sum_{\gamma\in \Z^d} \Gamma(\alpha,\gamma) \Gamma(\gamma,\beta),$$
where $\Gamma$ denotes the covariance of the DGFF. $\Gamma$ has an easy representation in terms of the simple random walk $(S_n)_{n\ge 0}$ on $\Z^d$ given by
$$\Gamma(\alpha, \,\beta)=\sum_{m\ge 0} \mathrm P_\alpha[ S_m=\beta]$$
($\mathrm P_\alpha$ is the law of $S$ starting at $\alpha$). This entails that
\begin{equation}\label{eq:cov_membrane}
G(\alpha,\,\beta)= \sum_{m\ge 0} (m+1) \mathrm P_\alpha[ S_m=\beta]=\mathrm E_{\alpha,\beta }\left[ \sum_{\ell,\,m=0}^{+\infty} \one_{\left\{S_m= \tilde S_\ell\right\} }\right]
\end{equation}
where $S$ and $\tilde S$ are two independent simple random walks started at $\alpha$ and $\beta$ respectively.  First one can note from this representation that $G(\cdot,\,\cdot)$ is translation invariant. Hence we shall write $G(\alpha,\,\beta)=G(\alpha-\beta,\,0)=G(\alpha-\beta)$ with a slight abuse of notation, and the variance will be denoted by $G(0)$. The existence of the infinite volume measure in $d\ge 5$ gives that $G(0)<+\infty$. Using the above one can derive the following property of the covariance:
\begin{lemma}[{\citet[Lemma 5.1]{Sakagawa}}]\label{lemma: covariance:mm}
\eq{}\label{eq:cov:mm}
\lim_{\|\alpha\|\to+\infty}\frac{G(\alpha)}{\|\alpha\|^{4-d}}=\eta_2
\eeq{}
where
$$
\eta_2=(2\pi)^{-d}\int_0^{+\infty}\int_{\R^d}\exp\left(\iota\langle \zeta,\, \theta\rangle-\frac{\|\theta\|^4 t}{4\pi^2}\right)\De\theta\De t
$$
for any $\zeta\in \mathbb{S}^{d-1}$.
\end{lemma}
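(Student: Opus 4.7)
The plan is to use Fourier analysis on the discrete torus. Since $G = \Delta^{-2}$ (with $\Delta$ the discrete operator appearing in the definition of the model), one has the representation
\[
G(\alpha) = \frac{1}{(2\pi)^d}\int_{[-\pi,\pi]^d} \frac{e^{-\iota\langle \alpha, \theta\rangle}}{\hat L(\theta)^{2}}\, \mathrm d\theta,
\]
where $\hat L(\theta)$ is the Fourier symbol of $\Delta$: a smooth non-negative function on $[-\pi,\pi]^d$, vanishing only at the origin, with Taylor expansion $\hat L(\theta) = c\|\theta\|^{2} + O(\|\theta\|^{4})$ for some explicit $c > 0$. To handle the non-integrable singularity at $0$ that will drive the large-$\|\alpha\|$ asymptotics, I would combine this with the subordination identity $y^{-2} = \int_{0}^{\infty} e^{-t y^{2}}\, \mathrm d t$ applied to $y = \hat L(\theta)$, obtaining the absolutely convergent double integral
\[
G(\alpha) = \frac{1}{(2\pi)^d}\int_{0}^{\infty}\!\!\int_{[-\pi,\pi]^d} e^{-\iota\langle \alpha, \theta\rangle - t\, \hat L(\theta)^{2}}\, \mathrm d\theta\, \mathrm d t,
\]
which already has the structural form appearing in $\eta_2$.

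The second step is a scaling argument. Setting $R = \|\alpha\|$ and $\zeta = \alpha/R \in \mathbb S^{d-1}$, I would substitute $\theta = \eta/R$ and $t = R^{4} s$. The Jacobians $R^{-d}$ and $R^{4}$ combine to extract the prefactor $R^{4-d}$, yielding
\[
G(\alpha) = \frac{R^{4-d}}{(2\pi)^d}\int_{0}^{\infty}\!\!\int_{[-R\pi, R\pi]^d} e^{-\iota\langle \zeta, \eta\rangle - s R^{4}\, \hat L(\eta/R)^{2}}\, \mathrm d\eta\, \mathrm d s.
\]
Pointwise in $(s,\eta)$, $R^{4}\hat L(\eta/R)^{2} \to c^{2}\|\eta\|^{4}$ by the Taylor expansion of $\hat L$ at the origin, and a final change of variables $\eta \mapsto \lambda\eta$ absorbing $c$ matches the limiting integrand with the one defining $\eta_2$. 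In particular, the limiting integrand depends on $\zeta$ only through an oscillating phase multiplying a radial exponential, so the value is independent of the direction of $\zeta$, as is consistent with the statement of the lemma.

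The main obstacle is justifying the passage to the limit under the integral sign by dominated convergence. This rests on a uniform quadratic lower bound $\hat L(\theta) \geq c'\|\theta\|^{2}$ on the whole torus $[-\pi,\pi]^d$, which is standard since $\hat L$ is smooth, strictly positive off the origin, and vanishes only quadratically at $0$. The bound provides the dominating integrand $e^{-s (c')^{2}\|\eta\|^{4}}$, which is jointly integrable on $(0,\infty) \times \R^{d}$ exactly when $d \geq 5$ --- precisely the supercritical regime considered. One also needs to rule out a non-vanishing contribution from $\theta$ away from the origin (equivalently, $\|\eta\|$ of order $R$): there $\hat L$ is bounded below by a positive constant, the factor $e^{-t R^{4} \hat L(\eta/R)^{2}}$ forces the effective $t$-range to be $t \lesssim R^{-4}$, and a direct estimate shows that this regime contributes a term of order strictly smaller than $R^{4-d}$. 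As a sanity check one could alternatively exploit the random walk representation $G(\alpha) = \sum_{m \geq 0}(m+1)\mathrm P_{0}[S_{m} = \alpha]$, rescale $m = R^{2} u$, and invoke the local central limit theorem $\mathrm P_{0}[S_{m}=\alpha] \approx (d/2\pi m)^{d/2} e^{-d\|\alpha\|^{2}/(2m)}$ to obtain a Riemann sum converging to the same Gaussian integral; but controlling the LCLT error uniformly in the short-time regime $m \ll R^{2}$ (where Cram\'er-type exponential bounds must replace the Gaussian approximation) makes that route technically heavier than the Fourier argument.
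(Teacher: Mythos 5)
The paper does not prove this lemma at all -- it is quoted verbatim from Sakagawa (Lemma 5.1 there), whose argument runs through the random walk representation $G(\alpha)=\sum_{m\ge 0}(m+1)\mathrm P_0[S_m=\alpha]$ and the local CLT, i.e.\ the route you relegate to a ``sanity check''. Your Fourier--subordination--scaling skeleton is a legitimate alternative, but as written it has a genuine gap at exactly the step you identify as ``the main obstacle''. The dominating function you propose, $e^{-s(c')^2\|\eta\|^4}$, is \emph{not} jointly integrable on $(0,\infty)\times\R^d$ for any $d$: integrating out $s$ leaves $\mathrm{const}\cdot\|\eta\|^{-4}$, whose integral over $\R^d$ diverges at infinity whenever $d\ge 4$ (and at the origin when $d\le 4$). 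This is not a fixable oversight within the DCT framework you set up, because the limiting object $\eta_2$ itself is only a conditionally convergent iterated integral ($\De\theta$ first, then $\De t$); no absolute-value domination of the double integral can succeed. Relatedly, your rescaled double integral has total variation $\int_0^\infty\int_{[-R\pi,R\pi]^d}e^{-sR^4\hat L(\eta/R)^2}\,\De\eta\,\De s = R^{d-4}\int_{[-\pi,\pi]^d}\hat L(\theta)^{-2}\De\theta$, which blows up with $R$ -- the $L^1$ mass lives at $\|\eta\|\sim R$ and is killed only by oscillatory cancellation, not by size.

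The same problem resurfaces in your treatment of the region $\theta$ bounded away from $0$: there the ``direct estimate'' by absolute values gives $\int_{\|\theta\|>\eps}\hat L(\theta)^{-2}\De\theta=\mathrm O(1)$, which is enormously larger than the required $\o{\|\alpha\|^{4-d}}$ (recall $\|\alpha\|^{4-d}\to 0$). To close both gaps you must integrate in $\theta$ first for fixed $t$ (or $s$) and exploit the phase: away from the origin $\hat L^{-2}$ is smooth and periodic, so repeated summation/integration by parts against $e^{-\iota\langle\alpha,\theta\rangle}$ (after a smooth cutoff excising a neighbourhood of $0$) yields decay faster than any power of $\|\alpha\|^{-1}$; near the origin one shows $f_R(s):=\int e^{-\iota\langle\zeta,\eta\rangle-sR^4\hat L(\eta/R)^2}\De\eta$ converges pointwise in $s$ and admits a bound $|f_R(s)|\le C\min(s^{-a},s^{-d/4})$ with $a<1$, uniform in $R$, which again requires the oscillation for small $s$. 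Only then does dominated convergence in the single variable $s$ apply (using $d\ge 5$ for integrability at $s=\infty$). A final bookkeeping remark: rescaling $\eta\mapsto\lambda\eta$ to ``absorb $c$'' also rescales the phase $\langle\zeta,\eta\rangle$ and introduces a Jacobian; to match the stated normalization one should rescale $s$ instead, and you should verify that the resulting constant agrees with the $4\pi^2$ appearing in $\eta_2$ under the paper's normalization of $\Delta$.
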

Just as the DGFF enjoys the spatial Markov property, the membrane model does too. In the finite volume case it was shown in \cite{Cip13}. The results extend easily by the DLR formalism to the infinite volume measure:
\begin{proposition}[Markov property]\label{prop: MP}
Let $(\psi_\alpha)_{\alpha\in\Z^d}$ and $(\vr_\alpha)_{\alpha\in \Z^d}$ be the finite and infinite volume membrane model under the measures $\bprob_N$ and $\bprob$ respectively.
\begin{itemize}
\item[(a)]  Finite volume (\citet[Lemma 2.2]{Cip13}): let $B \subseteq V_N$. Let $\mathcal F_B:=\sigma(\psi_\gamma,\,\gamma \in V_N\setminus B)$. Then
\eq{}\label{eq:cip13}
 \{\psi_\alpha\}_{\alpha\in B} \stackrel{d}{=} \left\{\mathbf E_N\left[\psi_\alpha|\mathcal F_B\right]+ \psi'_\alpha\right\}_{\alpha\in B}
\eeq{}
where ``$\stackrel{d}{=}$'' indicates equality in distribution, and, under $\mathbf P_N(\cdot)$, $\psi'_\alpha$ is independent of $\mathcal F_B$. Also $\{\psi'_\alpha\}_{\alpha \in B}$ is distributed as the membrane model with 0-boundary conditions outside B.
\item[(b)] Infinite volume: let $K \Subset \Z^d$ be a finite subset, $U:=K^{\mathrm c}$ and for $\alpha\in \Z^d$ define $\mu_\alpha:= \mathbf E[\vr_\alpha|\mathcal F_K]$ where $\mathcal F_K:=\sigma(\vr_\gamma: \gamma \in K)$. Also define $\psi_\alpha:=\vr_\alpha-\mu_\alpha$. Then $\mu_\alpha$ is $\mathcal F_K$-measurable, moreover under $\bprob$ we have that $\psi_\alpha$ is independent of $\mu_\alpha$ and distributed as $\bprob_U$, that is,
$$\mathbf E[ \psi_\alpha\psi_\beta]= G_U(\alpha,\beta).$$
\end{itemize}
\end{proposition}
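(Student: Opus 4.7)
Part~(a) is quoted verbatim from \cite[Lemma~2.2]{Cip13} and I would take it as given. For part~(b) I would verify the three claims in order. Since $K\Subset\Z^d$ is finite and $\vr$ is a centered Gaussian field under $\bprob$, the conditional expectation $\mu_\alpha$ coincides with the $L^2(\bprob)$-orthogonal projection of $\vr_\alpha$ onto the finite-dimensional subspace $\mathrm{span}\{\vr_\gamma:\gamma\in K\}$. Writing $\mu_\alpha=\sum_{\gamma\in K}c_{\alpha,\gamma}\vr_\gamma$, the coefficients are characterised by the normal equations $G(\alpha,\gamma')=\sum_{\gamma\in K}c_{\alpha,\gamma}\,G(\gamma,\gamma')$ for $\gamma'\in K$, which makes $\mu_\alpha$ manifestly $\mathcal F_K$-measurable. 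By construction $\psi_\alpha=\vr_\alpha-\mu_\alpha$ is uncorrelated with every $\vr_\gamma$, $\gamma\in K$, and since the whole family is jointly Gaussian this uncorrelatedness upgrades to independence of $\psi_\alpha$ and $\mathcal F_K$, hence of $\psi_\alpha$ and $\mu_\alpha$.

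It remains to identify the covariance of $\psi$ under $\bprob$. Fix $\alpha,\beta\in U$ and pick $N$ so large that $K\cup\{\alpha,\beta\}\subset V_N$. Applying part~(a) under $\bprob_N$ with $B:=V_N\setminus K$, so that $\mathcal F_B=\sigma(\psi_\gamma:\gamma\in K)$, yields
\begin{equation*}
\mathrm{Cov}_{\bprob_N}\Bigl(\psi_\alpha-\sum_{\gamma\in K}c^{(N)}_{\alpha,\gamma}\psi_\gamma,\;\psi_\beta-\sum_{\gamma\in K}c^{(N)}_{\beta,\gamma}\psi_\gamma\Bigr)=G_{V_N\setminus K}(\alpha,\beta),
\end{equation*}
where $c^{(N)}$ are the regression coefficients computed from $G_N$. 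The weak convergence $\bprob_N\Rightarrow\bprob$ from Proposition~1.2.3 of \cite{Kurt_thesis} gives $G_N\to G$ pointwise on the finite set $K\cup\{\alpha,\beta\}$, so $c^{(N)}\to c$ and the left-hand side converges to $\mathbf E[\psi_\alpha\psi_\beta]$ in the infinite-volume sense. Combined with $G_{V_N\setminus K}(\alpha,\beta)\to G_U(\alpha,\beta)$ this produces the identity $\mathbf E[\psi_\alpha\psi_\beta]=G_U(\alpha,\beta)$ and identifies the law of $\psi$ as $\bprob_U$.

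The delicate point is the Green-function limit $G_{V_N\setminus K}\to G_U$. Because the membrane Hamiltonian $\sum_\alpha(\Delta\vr_\alpha)^2$ couples sites at lattice distance up to two, the Dirichlet bilaplacian on $V_N\setminus K$ is specified with zero values on the 2-thick layer $K\cup V_N^{\mathrm c}$, and a direct random-walk expansion analogous to \eqref{eq:cov_membrane} is not as transparent as in the DGFF case. I would establish this convergence via the weak convergence of the finite-volume Gibbs measures on $V_N\setminus K$ with zero boundary on $K\cup V_N^{\mathrm c}$ to an infinite-volume Gibbs measure on $U=K^{\mathrm c}$ with zero boundary on $K$: in $d\ge 5$ this follows from the same tightness/compactness argument underlying Proposition~1.2.3 of \cite{Kurt_thesis}, with $\Z^d$ replaced by $U$. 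All remaining manipulations are routine Gaussian calculations, the only bookkeeping care being that $N$ must be taken large enough that the 2-thick layer separating $K$ from $V_N^{\mathrm c}$ does not collapse.
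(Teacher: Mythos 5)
Your proposal is correct and follows essentially the same route as the paper: both parts rest on the Gaussian projection argument for independence, the finite-volume Markov property of part (a) applied with $B=V_N\setminus K$, and the key limit $G_{V_N\setminus K}\to G_U$, which the paper establishes exactly as you suggest (tightness via \citet[Proposition 2.1]{Giacomin} together with pointwise convergence of the Dirichlet Green's functions via \citet[Theorems 13.24, 13.26]{Georgii}). The only presentational difference is that you pass to the limit at the level of two-point functions, whereas the paper phrases the limit as a DLR equation for the infinite set $U$ verified on cylinder events via Dynkin's $\pi$--$\lambda$ theorem; for centered Gaussian fields these are equivalent.
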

Proposition~\ref{prop: MP} (b) will be shown later in the paper. The spatial Markov property is a crucial tool in the analysis of Gaussian models. For the DGFF it has been widely been applied for example to show properties in the percolation of level sets,  see~\cite{ASS, DrePF, PFASS}. We should stress that our proof of Proposition~\ref{prop: MP} (b) follows the ideas of \citet[Lemma 1.2]{PFASS}, although due to the lack of random walk representation for $G_U$ some more effort is required to achieve the result. In fact, we highlight the main challenges one encounters in handling the membrane model in contrast to the DGFF:
\begin{itemize}
\item[$\spadesuit$] the covariance of the finite volume measure lacks a random walk representation and hence many estimates rely on discrete potential theory as introduced in \cite{Kurt_d4,Kurt_d5}.
\item[$\spadesuit$] The finite volume Gaussian field can be negatively correlated and hence applications of the FKG inequality are not possible.
\end{itemize}

\subsection{Massive free field}\label{subsec:intro:massive}
The second model we consider is the massive free field. It is called so since it has an an Hamiltonian of the form
\[
-\sum_{\alpha \in \Z^d}(\nabla \varphi_\alpha)^2-m\sum_{x\in \Z^d}\varphi_\alpha^2
\]
where $m>0$ is a parameter called the ``mass'' and $\nabla$ is the discrete gradient.
For detailed properties of this model one may refer to \citet[Section~3.3]{Funaki}, \citet[Section~8.5]{veleniknotes} or to the recent article by~\cite{PFmassive}. Here we will review the key features we need, namely the random walk representation of its covariances.

Let $\vartheta\in (0,1)$ be fixed. We consider the graph $\Z^d\cup \{\ast\}$ where $\{\ast\}$ is a cemetery state. On this state space we define a Markov chain $S_n$ with transition probabilities as follows: $$p_{x, y}= \frac1{2d} (1-\vartheta) \one_{\left\{\alpha\sim \beta\right\}}, \quad p_{\alpha, \ast} =\vartheta, \quad p_{\ast, \ast}=1.$$
Let the canonical law of the chain starting at $\alpha\in \Z^d$ be denoted by $\prob^\alpha_\vartheta$ and its expectation by $\E^\alpha_\vartheta$.
Given $U\subset \Z^d$, $\prob^\alpha_{\vartheta, U}$ indicates the law of the chain starting at $\alpha\in \Z^d$ either killed uniformly at rate $\vartheta$ or when first entering $U$.  When $U=\emptyset$, we denote $\prob^{\alpha}_{\vartheta,\emptyset}= \prob^\alpha_{\vartheta}$.  The Green's function $g_{\vartheta, U}$ of this walk is
$$g_{\vartheta, U}(\alpha,\beta)=\sum_{n\ge 0} \prob^{\alpha}_{\vartheta, U}[ S_n=\beta]=\sum_{n\ge 0} (1-\vartheta)^n \prob^\alpha_0[ S_n=\beta, n<H_U]$$
where $H_U=\inf\{n\ge 0: \, S_n\in U\}$. When $U=\emptyset$, one writes $g_\vartheta(\alpha,\beta)= g_\vartheta(\alpha-\beta, 0)= g_\vartheta(\alpha-\beta)$. Take $U\subset \Z^d$ and $K\subseteq U^{\mathrm c}$, then by the strong Markov property one has
\begin{equation}\label{eq:massiveMP:green}
g_{\vartheta, U}(\alpha,\beta)= g_{\vartheta, U\cup K}(\alpha,\beta)+ \sum_{\gamma\in K} \prob^\alpha_{\vartheta, U}[ H_K<+\infty, X_{H_K}=\gamma] g_{\vartheta, U}(\gamma,\beta).
\end{equation}

Let us consider $\Z^d$ with $d\ge 1$ and $\vartheta\in (0,1)$. First we consider the massive free field on $\Z^d$, that is with $U=\emptyset$. For $\vartheta \in (0,1)$ we denote by $\bprob_\vartheta$ the law on $\R^{\Z^d}$ of the massive free field, under which  $\vr=(\vr_x)_{x\in \Z^d}$ are distributed as a centered Gaussian field with covariance
$$\mathbf E_\vartheta[ \vr_x \vr_y ]= g_{\vartheta}(x,y).$$
A consequence of~\eqref{eq:massiveMP:green} is that Proposition~\ref{prop: MP} holds for the massive Gaussian free field, see for example \citet[Lemma 1.1]{PFmassive}. Conditions for a random field to inherit the Markov property from the underlying Markov processes of the covariance are given also in \cite{Dyn80}.

\subsection{Fractional fields}\label{subsec:DFGF}
In this subsection we describe the fractional fields arising out of the Green's function which are local times of an isotropic stable law. The model has been dealt with in several works, for example in \cite{BDZ95} where the entropic repulsion event was studied, and in \cite{caputo_thesis}. Let $d\ge 1$ and $q_s$ be the density of the symmetric isotropic stable law on $\R^d$ for some $0<s<\min\left\{2,\,d\right\}$. This means that the characteristic function of $q_s$ is given by
$$\int_{\R^d} \e^{\iota \langle t,\, x\rangle} q_s(x) \De x=\e^{-\rho \|t\|^s} $$
for $t\in \R^d$ and some $\rho>0$. Let $Q(\alpha,\beta)$ for $\alpha,\beta\in \Z^d$ be the transition matrix of an isotropic $\alpha$-stable random walk, that is,
\begin{equation}\label{eq:transition:fgf}
Q(\alpha,\beta)=\int_{V} q_s(x+ (\alpha-\beta)^{+})\De x
\end{equation}
where $V=[-1/2, 1/2]^d$ and for $\alpha=(\alpha_1,\ldots, \alpha_d)\in \Z^d$, we denote by $(\alpha)^{+}=(|\alpha_1|,\ldots, |\alpha|^d)$.
The Green's function corresponding to the above random walk is given by
$$G_s(\alpha,\beta)=\sum_{m=0}^{+\infty} Q^m(\alpha,\beta)=(1-Q)^{-1}(\alpha,\beta).$$
We consider the Gaussian interface model whose law on $\Lambda\Subset \Z^d$ is given by
$$\mathbf P_{s,\Lambda}(\De \vr)=\frac1{Z_\Lambda}\exp(- H_\Lambda(\vr)) \prod_{\alpha\in \Lambda}\De \vr_\alpha \prod_{\alpha\notin \Lambda} \delta_0(\vr_\alpha)$$
where the Hamiltonian is given by
$$H_\Lambda(\vr)= \frac12\sum_{\alpha,\beta\in \Lambda} \vr_\alpha(I- Q)_{\Lambda}(\alpha,\beta)\vr_\beta$$
and $(I-Q)_\Lambda= (\delta(\alpha,\beta)- Q(\alpha,\beta))_{\alpha,\beta\in \Lambda}$.
Let $G_{s,\Lambda}$ denote the corresponding Green's function for the killed random walk, that is,\begin{equation}\label{eq:dfgf:greenrep}
G_{s,\Lambda}(\alpha,\beta) = \mathsf E_\alpha\left[ \sum_{m=0}^{\tau_\Lambda-1} \one_{\{S_m=\beta\}}\right].
\end{equation}
Here $\mathsf P$ denotes the law of an isotropic random walk and $\tau_\Lambda$ denotes its corresponding exit time from $\Lambda$.
It follows by calculations similar to Lemma 5 of \cite{ofernotes} that $(I-Q)_{\Lambda}$ is symmetric and positive definite and hence the Gibbs measure exists by \citet[Proposition 13.13]{Georgii} and moreover it is a centered Gaussian field on $\R^\Lambda$ with covariance $\mathbf E_\Lambda [\vr_\alpha \vr_\beta]= G_{s,\,\Lambda}(\alpha,\beta)$. Note that using the characteristic function of $q_s$  and the fact that $s<d$ it follows that $\mathbf P_{s,\Lambda}\to \mathbf P$ as $\Lambda\uparrow \Z^d$. In this case the finite volume Gibbs measure also satisfies the DLR equation. A consequence of the Markov property of the field is that one can write, for $K\Subset \Z^d$ and $U=K^{\mathrm c}$,
\begin{equation}\label{eq:green:MP}
G_s(\alpha,\beta)= G_{s,U}(\alpha,\beta)+ \sum_{\gamma\in K} \prob_\alpha[ \tau_U<+\infty, S_{\tau_U}=\gamma]G_s(\gamma,\beta).
\end{equation}
Note that using the above decomposition for $K$ and letting $\mathcal F_K=\sigma(\vr_\beta: \,\beta\in K)$, one has from Lemma A.2 of \cite{caputo_thesis},
 \begin{equation}\label{eq:drift}
 \mu_\alpha=\mathbf E[ \vr_\alpha| \mathcal F_K]=\sum_{\beta\in K}\mathsf P_\alpha\left(H_{K}<+\infty,\,S_{H_{K}}=\beta\right)\vr_\beta,\quad \alpha\in \Z^d.
 \end{equation}
Recall $H_K:=\inf\left\{n\ge 0:\,S_n\in K \right\}$ and $S_n$ is the law of the isotropic random walk defined above.


It is known from \cite{BDZ95} that there exists $\omega_{s,d}>0$ such that
\begin{equation}\label{eq:dfgf:cov}
\frac{G_s(\alpha,\beta)}{\omega_{s,d} \|\alpha-\beta\|^{s-d}}\to 1\; \text{ as $\|\alpha-\beta\|\to +\infty$}.
\end{equation}
Note that since $s<d$ we have the $G_s(\alpha,\beta)\to 0$ as $\|\alpha-\beta\|\to + \infty$.

\section{Proofs of the result of Section~\ref{sec:extrema_Gauss}}\label{sec:proof:maintheorem}
\subsection{Reminder on the Stein-Chen method}
For the reader's convenience in this subsection we recall the results from \cite{AGG} which we use crucially in our proofs, and also in order to fix the notations for the subsequent Sections.

Let $A$ be a countable index set and $(X_\alpha)_{\alpha\in  A}$ be a sequence of (possibly dependent) Bernoulli random variables of parameter $p_\alpha$. Let $W:=\sum_{\alpha\in A}X_\alpha$ and $\lambda:=E[{W}]$. Now for each $\alpha$ let $B_\alpha\subseteq  A$ be a subset which we consider a ``neighborhood'' of dependence for the variable $X_\alpha$, such that $X_\alpha$ is nearly independent from $X_\beta$ if $\beta\in A\setminus B_\alpha$. Set
$$
b_1:=\sum_{\alpha\in A}\sum_{\beta\in B_\alpha}p_\alpha p_\beta,
$$
$$
b_2:=\sum_{\alpha\in A}\sum_{\alpha\neq \beta\in B_\alpha}E\left[{X_\alpha X_\beta}\right],
$$
$$
b_3:=\sum_{\alpha\in A}E\left[{\left|E\left[{X_\alpha-p_\alpha\left|\right.\mathcal H_1}\right]\right|}\right]
$$
where
$$
\mathcal H_1:=\sigma\left(X_\beta:\,\beta\in A\setminus B_\alpha\right).
$$
\begin{theorem}[Theorem 1, \cite{AGG}]\label{thm:AGG}
Let $Z$ be a Poisson random variable with $E[{Z}]=\lambda$ and let $\|\cdot\|_{TV}$ be the total variation distance between probability measures. Then
$$
\|\mathcal L(W)-\mathcal L(Z)\|_{TV}\le 2(b_1+b_2+b_3)
$$
and
$$
\left|P(W=0)-\e^{-\lambda} \right|<\min\left\{1,\,\lambda^{-1}\right\}(b_1+b_2+b_3).
$$
\end{theorem}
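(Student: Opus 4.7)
The proof is the canonical Stein--Chen argument for Poisson approximation, so I would follow the blueprint of Arratia, Goldstein and Gordon. First, I recall the Stein equation for the Poisson$(\lambda)$ law: for any bounded $h:\N\to\R$ there is an explicit bounded solution $g=g_h:\N\to\R$ of
$$
\lambda g(k+1)-k g(k)=h(k)-E[h(Z)],\qquad k\ge 0.
$$
Standard Barbour--Eagleson estimates, obtained by direct manipulation of the explicit summation formula for $g_h$, give that whenever $h=\one_A$ for some $A\subseteq\N$ (in particular $A=\{0\}$), both $\|g\|_\infty$ and the discrete derivative $\|\Delta g\|_\infty:=\sup_k|g(k+1)-g(k)|$ are bounded by $\min\{1,(1-\e^{-\lambda})/\lambda\}\le\min\{1,\lambda^{-1}\}$. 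Applied to $W$ and taking expectations, $E[h(W)]-E[h(Z)]=E[\lambda g(W+1)-W g(W)]$.

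The second step is to decompose $E[Wg(W)]=\sum_\alpha E[X_\alpha g(W)]$. Setting $V_\alpha:=\sum_{\beta\in B_\alpha\setminus\{\alpha\}}X_\beta$ and $U_\alpha:=\sum_{\beta\notin B_\alpha}X_\beta$, so that $W=X_\alpha+V_\alpha+U_\alpha$, and using $X_\alpha\in\{0,1\}$ to get $X_\alpha g(W)=X_\alpha g(V_\alpha+U_\alpha+1)$, I would telescope through the ``independent'' surrogate $g(U_\alpha+1)$: for each $\alpha$,
\begin{align*}
p_\alpha E[g(W+1)]-E[X_\alpha g(W)]&=p_\alpha E\bigl[g(W+1)-g(U_\alpha+1)\bigr]\\
&\quad-E\bigl[X_\alpha\bigl(g(V_\alpha+U_\alpha+1)-g(U_\alpha+1)\bigr)\bigr]\\
&\quad+E\bigl[(p_\alpha-X_\alpha)g(U_\alpha+1)\bigr].
\end{align*}
Since $|W+1-(U_\alpha+1)|=X_\alpha+V_\alpha$, the first line is bounded in absolute value by $p_\alpha\|\Delta g\|_\infty(p_\alpha+\sum_{\beta\in B_\alpha\setminus\{\alpha\}}p_\beta)$; the second by $\|\Delta g\|_\infty\sum_{\beta\in B_\alpha\setminus\{\alpha\}}E[X_\alpha X_\beta]$; and since $g(U_\alpha+1)$ is $\mathcal H_1$-measurable, pulling it outside the conditional expectation yields
$$
\bigl|E[(p_\alpha-X_\alpha)g(U_\alpha+1)]\bigr|=\bigl|E[g(U_\alpha+1)E[X_\alpha-p_\alpha\mid\mathcal H_1]]\bigr|\le \|g\|_\infty\,E\bigl[\bigl|E[X_\alpha-p_\alpha\mid\mathcal H_1]\bigr|\bigr].
$$
Summing over $\alpha$ bounds the three lines by $\|\Delta g\|_\infty b_1$, $\|\Delta g\|_\infty b_2$ and $\|g\|_\infty b_3$ respectively, so that $|E[h(W)]-E[h(Z)]|\le (b_1+b_2+b_3)\max(\|g\|_\infty,\|\Delta g\|_\infty)$.

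The theorem then follows by specialising $h$. For the TV inequality, take $h=\one_A$ and a supremum over $A\subseteq\N$, combined with the identity $\|\mu-\nu\|_{TV}=\sup_A|\mu(A)-\nu(A)|$; the constant $2$ absorbs the convention and the crude worst case $\max(\|g\|_\infty,\|\Delta g\|_\infty)\le 1$. The point-mass inequality is the same argument with $h=\one_{\{0\}}$ and the sharper bound $\max(\|g\|_\infty,\|\Delta g\|_\infty)\le\min\{1,\lambda^{-1}\}$. The most delicate ingredient is deriving those Stein--Eagleson bounds on $g$ and $\Delta g$: this is a monotonicity argument on the explicit representation of $g$ and is where the universal constants in the statement come from. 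Everything downstream is algebraic bookkeeping of the three-term telescoping above.
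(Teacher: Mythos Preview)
The paper does not prove this statement at all: Theorem~\ref{thm:AGG} is simply quoted from \cite{AGG} in the ``Reminder on the Stein-Chen method'' subsection, with no accompanying argument. Your proposal is the standard Stein--Chen coupling/telescoping argument that is precisely the method of the original Arratia--Goldstein--Gordon paper, so there is nothing to compare against here beyond noting that you have correctly reproduced the source proof.
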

The ``point process'' version of the Poisson approximation Theorem reads as follows:
\begin{theorem}[{\citet[Theorem 2]{AGG}}]\label{thm:AGG2}
Let $A$ be an index set. Partition the index set $A$ into disjoint non-empty sets $A_1,\,\ldots, \,A_k$. Let $(X_\alpha)_{\alpha\in A}$ be a dependent Bernoulli process with parameter $p_\alpha$. Let $(Y_\alpha)_{\alpha\in A}$ be independent Poisson random variables with intensity $p_\alpha$. Also let
$$W_j:= \sum_{\alpha\in A_j} X_\alpha \quad\text{ and }\quad Z_j:=\sum_{\alpha\in A_j} Y_\alpha\quad \text{ and}\quad\lambda_j:=  E[W_j]= E[Z_j].$$
Then
\begin{equation}\label{eq:errorbd}
\| \mathcal L( W_1, \ldots, W_k)- \mathcal L( Z_1,\ldots, Z_k)\|_{TV}\le 2\min\left\{1,\, 1.4 \left(\min \lambda_j\right)^{-1/2}\right\}(2b_1+2b_2+b_3)
\end{equation}
where $\mathcal L( W_1, \ldots, W_k) $ denotes the joint law of these random variables.
\end{theorem}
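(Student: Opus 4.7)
The plan is to adapt the Stein-Chen method to the multivariate Poisson distribution on $\Z_+^k$ via a suitable Stein equation. For each Borel set $B\subseteq \Z_+^k$ I construct $g=g_B$ solving
\begin{equation*}
\sum_{j=1}^k \lambda_j\bigl[g(w+e_j)-g(w)\bigr]-\sum_{j=1}^k w_j\bigl[g(w)-g(w-e_j)\bigr]=\one_B(w)-\pro(Z\in B),
\end{equation*}
realised as $g(w)=-\int_0^{+\infty}[\prob(\eta^w_t\in B)-\pro(Z\in B)]\,\De t$, where $\eta^w_t$ is the $k$-dimensional immigration-death chain started at $w$ with coordinate-wise immigration rates $\lambda_j$ and unit per-particle death rate. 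Its stationary law is $\mathcal L(Z_1,\ldots,Z_k)$ and, crucially, its coordinates evolve as independent one-dimensional immigration-death chains. Since $\|\mathcal L(W)-\mathcal L(Z)\|_{TV}=\sup_B|\mathbb E\one_B(W)-\pro(Z\in B)|$, it suffices to bound the Stein equation evaluated at $W$, uniformly in $B$.

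The first technical input is the bound on the first differences $\Delta_j g(w):=g(w+e_j)-g(w)$. Coordinate-wise independence of $\eta^w_t$ reduces the estimate to the classical one-dimensional analysis of Barbour-Eagleson, which yields $\|\Delta_j g\|_\infty\le \min\{1,\,1.4\lambda_j^{-1/2}\}\le \min\{1,\,1.4(\min_i\lambda_i)^{-1/2}\}$ uniformly in $B$. This is the origin of the prefactor in the claim and is the only non-combinatorial step.

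Evaluating the Stein equation at $W$, I must estimate $\sum_j\{\lambda_j\mathbb E[\Delta_j g(W)]-\mathbb E[W_j\Delta_j g(W-e_j)]\}$. Letting $j(\beta)$ be the unique index with $\beta\in A_{j(\beta)}$, for each $\alpha\in A_j$ I introduce the $\mathcal H_1^{(\alpha)}$-measurable reduced vector $U^{(\alpha)}:=W-\sum_{\beta\in B_\alpha}X_\beta e_{j(\beta)}$, where $\mathcal H_1^{(\alpha)}:=\sigma(X_\gamma:\,\gamma\in A\setminus B_\alpha)$. Using $W_j=\sum_{\alpha\in A_j}X_\alpha$ and $\lambda_j=\sum_{\alpha\in A_j}p_\alpha$, and replacing $W$ by $U^{(\alpha)}$ in both sums, the immigration and death contributions reduce to $\sum_j\sum_{\alpha\in A_j}p_\alpha\mathbb E[\Delta_j g(U^{(\alpha)})]$ after applying the tower property $\mathbb E[X_\alpha\Delta_j g(U^{(\alpha)})]=\mathbb E[\mathbb E[X_\alpha\mid\mathcal H_1^{(\alpha)}]\Delta_j g(U^{(\alpha)})]$, and therefore cancel.

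The three residual errors generated by this bookkeeping are precisely $b_1$, $b_2$ and $b_3$. Replacing $W$ by $U^{(\alpha)}$ in the immigration term costs at most $\|\Delta_j g\|_\infty\sum_{\beta\in B_\alpha}p_\beta$ per $\alpha$ by a telescoping argument; summation yields the contribution $2b_1$, the factor $2$ absorbing the symmetric replacement in the death term. Replacing $W-e_j$ by $U^{(\alpha)}-e_j$ inside $\mathbb E[X_\alpha\Delta_j g(W-e_j)]$ matters only when some $X_\beta=1$ with $\beta\in B_\alpha\setminus\{\alpha\}$, and is controlled by $\sum_\alpha\sum_{\beta\in B_\alpha\setminus\{\alpha\}}\mathbb E[X_\alpha X_\beta]=b_2$, producing a contribution $2b_2$. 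Lastly, the approximation of $\mathbb E[X_\alpha\mid\mathcal H_1^{(\alpha)}]$ by $p_\alpha$ yields
\begin{equation*}
\Bigl|\sum_\alpha\mathbb E\bigl[(X_\alpha-\mathbb E[X_\alpha\mid\mathcal H_1^{(\alpha)}])\Delta_j g(U^{(\alpha)})\bigr]\Bigr|\le \|\Delta_j g\|_\infty\sum_\alpha\mathbb E\bigl|\mathbb E[X_\alpha-p_\alpha\mid\mathcal H_1^{(\alpha)}]\bigr|,
\end{equation*}
that is, the $b_3$ contribution. Collecting the three bounds, factoring out $\|\Delta_j g\|_\infty$ and taking the supremum over $B$ delivers the claimed inequality. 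The main obstacle is establishing the sharp constant $1.4$ in the first-difference bound on $g$; the multivariate reduction is straightforward thanks to independence of coordinates, but the one-dimensional Stein-kernel analysis with indicator test functions requires delicate monotonicity and coupling arguments on the birth-death chain.
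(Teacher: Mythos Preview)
The paper does not prove this statement: Theorem~\ref{thm:AGG2} is quoted verbatim from \cite{AGG} in the ``Reminder on the Stein-Chen method'' subsection and is used as a black box in the proof of Theorem~\ref{thm:zero:pp}. There is therefore no proof in the paper to compare against.

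That said, your outline is the standard Barbour--Stein route (immigration--death generator, coordinate-wise independence to reduce the gradient bound to the one-dimensional Barbour--Eagleson estimate, neighbourhood bookkeeping to extract $b_1,b_2,b_3$), which is indeed the method behind \cite{AGG}. The sketch is essentially correct in spirit; the places where a referee would press you are the exact accounting of the factors of $2$ in front of $b_1$ and $b_2$ (your ``symmetric replacement'' and telescoping remarks are too brief to verify the constants) and the reduction of the multivariate first-difference bound to the scalar one, which requires that the test set $B$ be allowed to be arbitrary in $\Z_+^k$ while you freeze all coordinates but one. None of this is a genuine gap, but since the paper itself defers to \cite{AGG}, a full write-up would need those details filled in.
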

\subsection{Proof of Theorem~\ref{thm:zero}}
\subsubsection{Reduction to the bulk}\label{subsec:red}
We begin by noticing that convergence is required only ``well inside'' $V_N$. More precisely, set $u_N(z):=b_N+ a_N z$. Recall that in this Subsection $\vr_\alpha$ denotes the coordinate process under $\pro$ and we will use the notation $\mathbb{G}(\alpha)$ for the variance.
\begin{lemma}
Let us denote by $L_N= \max_{\alpha\in V_N^\delta}\vr_\alpha$ and $J_N=\max_{\alpha\in V_N\setminus V_N^\delta}\vr_\alpha$. Assume that
\begin{equation}\label{assumption:finite}
\lim_{N\to+\infty}\pro\left( L_N\le a_N z+ b_N\right)= \exp\left(-\e^{-z+d\log (1-2\delta)}\right).
\end{equation}
Then it follows that
$$\lim_{N\to+\infty}\pro\left( \max_{\alpha\in V_N} \vr_\alpha \le a_N z+ b_N\right)= \exp(-\e^{-z}).$$
\end{lemma}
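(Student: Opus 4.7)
The plan is to leverage the stationarity of $\bprob$ (Assumption~\eqref{item:decreasing}) to identify $\max_{V_N}\vr_\alpha$ in distribution with the maximum over the $\delta$-bulk of a slightly enlarged box, where the hypothesis of the lemma applies directly.

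Concretely, I would pick an integer $\tilde n = \lceil n/(1-2\delta)\rceil$ and set $\tilde V := [0,\tilde n-1]^d\cap\Z^d$, of volume $\tilde M := \tilde n^d \sim N(1-2\delta)^{-d}$. By construction the $\delta$-bulk $\tilde V^\delta$ is a lattice box of side $n + O(1)$, hence $|\tilde V^\delta| = N(1 + O(N^{-1/d}))$; by stationarity of $\bprob$,
\[
\max_{\alpha\in V_N}\vr_\alpha \stackrel{d}{=} \max_{\alpha\in \tilde V^\delta}\vr_\alpha,
\]
up to negligible rounding (handled, e.g., by minor adjustments of $\tilde n$). Applying the lemma's hypothesis to $\tilde V$ in place of $V_N$ then yields
\[
\pro\bigl(\max_{\alpha\in\tilde V^\delta}\vr_\alpha \le u_{\tilde M}(z)\bigr) \xrightarrow{N\to\infty} \exp\bigl(-e^{-z+d\log(1-2\delta)}\bigr).
\]

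Second, I would compare the two scalings. From $\log\tilde M-\log N = -d\log(1-2\delta) + O(n^{-1})$, a Taylor expansion of the formulas in \eqref{eq:cs} gives
\[
b_{\tilde M} - b_N = -a_N\, d\log(1-2\delta) + o(a_N), \qquad a_{\tilde M}/a_N = 1 + o(1),
\]
so if $z'$ is defined by $u_{\tilde M}(z') = u_N(z)$, then $z'\to z + d\log(1-2\delta)$ as $N\to\infty$. Substituting,
\[
\pro\bigl(\max_{\alpha\in V_N}\vr_\alpha \le u_N(z)\bigr) = \pro\bigl(\max_{\alpha\in \tilde V^\delta}\vr_\alpha \le u_{\tilde M}(z')\bigr) \to \exp\bigl(-e^{-z'+d\log(1-2\delta)}\bigr) = \exp(-e^{-z}),
\]
which is the claim. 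Note that the specific $\log\log N$ correction in the definition of $b_N$ is precisely what makes the shift $d\log(1-2\delta)$ from the hypothesis cancel with the one coming from the comparison of $u_{\tilde M}$ and $u_N$.

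The most delicate step I foresee is the finite-volume case $\pro = \bprob_N$, where stationarity is unavailable. For this I would invoke Assumption~\eqref{item:less}: its $o((\log N)^{-1})$ covariance-matching scale is exactly calibrated so that the Gaussian tail estimates at the Gumbel scale $u_N(z)$ agree for $\bprob_N$ and $\bprob$ up to a factor $1+o(1)$. A convergence-of-types / coupling argument then transfers the limit established under the stationary measure $\bprob$ to $\bprob_N$, the boundary effects being absorbed into the $o(a_N)$ perturbation already tolerated above.
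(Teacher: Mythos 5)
Your argument for the infinite-volume, stationary case is essentially sound (it is a different route from the paper's: you embed a translate of $V_N$ into the bulk of an enlarged box and then do the same scaling comparison that the paper performs in Lemma~\ref{lemma:cot}), but the lemma is stated for $\pro$ equal to either $\bprob$ or $\bprob_N$, and your treatment of the finite-volume case is a genuine gap rather than a deferred detail. Under $\bprob_N$ there is no stationarity, and, worse, the measure itself is indexed by $N$: applying the hypothesis to the enlarged box $\tilde V$ would mean working under $\bprob_{\tilde M}$, a different measure, so the identification $\max_{V_N}\vr\stackrel{d}{=}\max_{\tilde V^\delta}\vr$ has no analogue. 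The proposed rescue does not work as described: a convergence-of-types theorem compares different affine normalizations of \emph{one} sequence of random variables, not the laws of maxima under two different Gaussian measures; and Assumption~(\ref{item:less}) only gives a two-sided covariance comparison for pairs in the bulk $V_N^\delta$ together with a one-sided variance bound on all of $V_N$. That is far from enough to ``transfer'' the law of $\max_{\alpha\in V_N}\vr_\alpha$ (a functional of the joint law, including the boundary layer, where no covariance lower bounds are assumed) from $\bprob$ to $\bprob_N$ by a soft coupling argument; in the paper the convergence under $\bprob_N$ is not transferred from $\bprob$ at all but re-proved, with (\ref{item:less}) entering the Stein--Chen estimates.

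For comparison, the paper's proof of this lemma is much more elementary and treats both measures simultaneously: the upper bound is immediate from $\max_{V_N}\vr\ge L_N$, and the lower bound uses $P(A\cap B)\ge P(A)-P(B^{\mathrm c})$ together with a union bound over the $\left(1-(1-2\delta)^d\right)N$ boundary-layer points, where only the Mills ratio and the variance bound $g_N(\alpha,\alpha)\le g(0)+\o{(\log N)^{-1}}$ on all of $V_N$ are needed; one then lets $\delta\to 0$ (so the hypothesis \eqref{assumption:finite} is used for all small $\delta$, which is available in context). If you keep your route for $\bprob$, you should still supply two small details — the $O(n^{d-1})$ extra bulk points are negligible by a union bound, and the evaluation of the hypothesis at the $N$-dependent point $z'_N\to z+d\log(1-2\delta)$ needs uniformity (P\'olya) or monotonicity plus continuity of the limit — but the finite-volume case must be argued along the lines of the boundary-layer union bound, not by comparison with $\bprob$.
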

\begin{proof}
First note that $\pro( \max_{\alpha\in V_N} \vr_\alpha\le u_N(z))= \pro( \max\{ L_N,\, J_N\} \le u_N(z))$. This implies that
$\pro(\max_{\alpha\in V_N} \vr_\alpha\le u_N(z)) \le \pro(L_N \le u_N(z)) \to \exp\left(-\e^{-z+d\log (1-2\delta)}\right)$ by \eqref{assumption:finite}. Now as $\delta \to 0$ this gives an upper bound. For a lower bound, using $P(A\cap B) \ge P(A)- P(B^{\mathrm c})$ we have
$$\pro\left(\max_{\alpha\in V_N} \vr_\alpha\le u_N(z)\right) \ge \pro\left( L_N\le u_N(z)\right) - \pro\left( J_N> u_N(z)\right).$$
Using the Mills ratio inequality
\eq{}\label{eq:Mills}
\left( 1-\frac1{t^2}\right)\frac{\e^{-t^2/2}}{\sqrt{2\pi}t}\le P\left(\mathcal N(0,\,1)>t \right)\le \frac{\e^{-{t^2/2}}}{\sqrt{2\pi}t},\quad t>0,
\eeq{}
we have
\begin{align*}
&\pro( J_N>u_N(z)) \le \sum_{\alpha\in V_N\setminus V_N^\delta} \pro( \vr_\alpha> u_N(z))\le\sum_{\alpha\in V_N\setminus V_N^\delta} \frac{\exp\left(- \frac{u_N(z)^2}{2 \mathbb{G}(\alpha)}\right)}{\sqrt{2\pi }u_N(z)} \sqrt{ \mathbb{G}(\alpha)}\\
&\le C\left|V_N\setminus V_N^\delta\right|\frac{\exp\left(- \frac{u_N(z)^2}{2g(0)}\right)}{\sqrt{2\pi }u_N(z)} \sqrt{ g(0)} \le  \left(1-(1-2\delta)^d\right) \to 0
\end{align*}
as $\delta\to 0$, where in the third inequality we have used Assumption~(\ref{item:less}) in the case $\pro=\bprob_N$. Combining this with \eqref{assumption:finite} and then letting $\delta\to 0$, one obtains the lower bound.
\end{proof}
Now we have to take care of the convergence on the bulk. The cardinality of the set we are working with will be slightly less than that of the original set and we need to be careful with it. More concretely, if we have $m_N:=|V_N^\delta|\approx (1-2\delta)^d N$ random variables, the next Lemma will allow us to derive the convergence of the maximum in $V_N$ on the scale $u_{N}(z)$ from that on $u_{m_N}(z)$. The following lemma is a consequence of \citet[Proposition 0.2]{Resnick} and for a detailed proof we refer to \citet[Lemma 8]{CCH2015}.
\begin{lemma}\label{lemma:cot}
Let $N\ge 1$, $F_N$ be a distribution function, and $m_N:=|V_N^\delta|$. Let $a_N$ and $b_N$ be as in~\eqref{eq:cs}. If $\lim_{N\to+\infty}F_N(a_{m_N}z+ b_{m_N})=\exp(-\e^{-z})$, then $$\lim_{N\to+\infty}F_N(a_Nz+b_N)=\exp\left(-\e^{-z+ d\log(1-2\delta)}\right).$$
\end{lemma}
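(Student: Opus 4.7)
The lemma is a convergence-of-types style statement: the proof plan is simply to rewrite $a_N z + b_N$ in the form $a_{m_N} z_N + b_{m_N}$ for a suitable sequence $z_N$, show $z_N \to z - d\log(1-2\delta)$, and then invoke the hypothesis together with the continuity of the Gumbel distribution to upgrade pointwise convergence to uniform convergence in the shift parameter.

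First I would compare $b_{m_N}$ and $b_N$ using the explicit formula~\eqref{eq:cs}. Since $m_N = |V_N^\delta| = (1-2\delta)^d N + O(N^{(d-1)/d})$, a Taylor expansion gives
\[
\sqrt{2\log m_N} = \sqrt{2\log N}\,\sqrt{1+\tfrac{d\log(1-2\delta)+o(1)}{\log N}} = \sqrt{2\log N} + \frac{d\log(1-2\delta)}{\sqrt{2\log N}} + O\bigl((\log N)^{-3/2}\bigr),
\]
and the subtracted $\log\log$ term in $b_{m_N}$ differs from the one in $b_N$ only by $o(1/\sqrt{\log N})$. This yields
\[
b_{m_N}-b_N = \sqrt{g(0)}\,\frac{d\log(1-2\delta)}{\sqrt{2\log N}} + o\!\bigl((\log N)^{-1/2}\bigr).
\]
Since $a_N = g(0)/b_N \sim \sqrt{g(0)/(2\log N)}$ (and the same for $a_{m_N}$), it follows immediately that
\[
\frac{b_N - b_{m_N}}{a_{m_N}}\ \longrightarrow\ -\,d\log(1-2\delta),\qquad \frac{a_N}{a_{m_N}}\ \longrightarrow\ 1.
\]

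Next I would set $z_N := \frac{a_N}{a_{m_N}}\,z + \frac{b_N-b_{m_N}}{a_{m_N}}$, so that $a_N z + b_N = a_{m_N} z_N + b_{m_N}$ by construction, and by the previous step $z_N \to z - d\log(1-2\delta)$. The hypothesis gives $F_N(a_{m_N}w + b_{m_N}) \to \exp(-e^{-w})$ for each fixed $w$; because the limit is continuous in $w$, Pólya's theorem upgrades this to locally uniform convergence in $w$, and we may substitute $w = z_N$. This produces
\[
\lim_{N\to+\infty} F_N(a_N z + b_N) = \exp\!\bigl(-e^{-(z - d\log(1-2\delta))}\bigr) = \exp\!\bigl(-e^{-z + d\log(1-2\delta)}\bigr),
\]
which is exactly the claim.

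There is no real obstacle here; the only thing to be careful about is the two small technical points noted above, namely (i) absorbing the $O(N^{(d-1)/d})$ rounding error in $m_N$ into the $o(1)$ correction of $\sqrt{2\log m_N}$, and (ii) justifying that plugging the convergent sequence $z_N$ into $F_N(a_{m_N}\cdot + b_{m_N})$ is legal. Both are handled by the fact that the limiting Gumbel distribution function is continuous, so the convergence assumed along constant arguments automatically holds along any convergent sequence of arguments.
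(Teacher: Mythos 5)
Your proof is correct and matches the intended argument: the paper itself only cites Resnick's convergence-of-types result (Proposition 0.2) and the detailed computation in \citet[Lemma 8]{CCH2015}, and that computation is exactly your asymptotic comparison $a_N/a_{m_N}\to 1$, $(b_N-b_{m_N})/a_{m_N}\to -d\log(1-2\delta)$ using $\log m_N=\log N+d\log(1-2\delta)+o(1)$. Your direct substitution of $z_N$ together with monotonicity of $F_N$ and continuity of the Gumbel limit is a self-contained rendering of the same convergence-of-types step, so there is nothing to add.
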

%
\subsubsection{Proof of Theorem~\ref{thm:zero}}\label{subsec:pointwise}
We start with the proof of convergence in the bulk. Define, for all $\alpha\in V_N^\delta$, $p_\alpha:=P(\vr_\alpha>u_{m_N(z)})$, and
$$
X_\alpha=\one_{\left\{\varphi_\alpha>u_{m_N}(z)\right\}}\sim Ber(p_\alpha).
$$
We furthermore introduce
$
W:=\sum_{\alpha=1}^N X_\alpha.
$
Of course $W$ is closely related to the maximum since $\left\{\max_{\alpha\in V_N^\delta}\vr_\alpha\le u_{m_N}(z)\right\}=\left\{W=0 \right\}$. We will now fix $z\in \R$ and $\lambda:=\e^{-z}$.
Our main idea is to apply Theorem~\ref{thm:AGG}. To this scope we define $B_\alpha$ as in the assumptions of Theorem~\ref{thm:zero} and show that $b_1,\,b_2$ and $b_3$ as per Theorem~\ref{thm:AGG} converge to zero. \\
\indent i. $b_1=\sum_{\alpha\in V_N^\delta}\sum_{\beta\in B_\alpha}p_\alpha p_\beta$. Exploiting the fact that for a fixed $z$ one can choose $N$ large enough so that $u_{N}(z)>0$, it follows that
$$p_\alpha \stackrel{\eqref{eq:Mills}}{\le} \frac{\e^{-\frac{u_{m_N}(z)^2}{2g(0)}}}{\sqrt{2\pi}u_{m_N}(z)} \sqrt{g(0)}$$
where in the case $\pro=\bprob_N$ we can use Assumption~\eqref{item:less} (modulo paying the price of a multiplicative constant). By the equality $|V_N^{\delta}|= m_N$ we get that $$b_1\le m_N s_N^d
\left( \frac{\e^{-\frac{u_{m_N}(z)^2}{2g(0)}}}{\sqrt{2\pi}u_{m_N}(z)} \sqrt{g(0)}\right)^2\le  c m_N^{-1} s_N^d\e^{-2z}\stackrel{(\ref{item:last})}{=}\o{1}.$$
Note that in the last line we have used the weaker assumption $s_N=\o{m_N^{1/d}}$ compared to the Assumption~\eqref{item:last} on $s_N$.\\
\indent {ii.} $b_2=\sum_{\alpha\in A}\sum_{\alpha\neq \beta\in B_\alpha}\Exp{X_\alpha X_\beta}$. First we need to estimate the joint probability
$$\pro\left(\vr_\alpha>u_{m_N}(z), \vr_\beta>u_{m_N}(z)\right).$$
To do so, we will prove it suffices to treat the case where $\pro=\bprob$, and then see that the same term with $\bprob_N$ is ``almost'' the same when $N$ is large.
Denote the covariance matrix of the vector $(\vr_\alpha,\,\vr_\beta)$ by
$$\Sigma_2=\begin{bmatrix}
\mathbb G(\alpha) & \mathbb G(\alpha,\,\beta)\\
\mathbb G(\alpha,\,\beta) & \mathbb G(\beta) \\
\end{bmatrix}$$
for $\mathbb G$ the covariance matrix of $\pro$ (so it can be either $g$ or $g_N$).
Note that, for $w\in \R^2$, one has
$$w^T\Sigma_2^{-1} w=\frac{\mathbb{G}(\alpha)w_2^2+\mathbb{G}(\beta)w_1^2 -2\mathbb{G}(\alpha,\,\beta) w_1w_2}{\mathbb{G}(\alpha)\mathbb{G}(\beta)-\mathbb{G}(\alpha,\,\beta)^2} .$$
Using $\mathbf{1}:=(1,\,1)^T$ we denote by
$$\Delta_i:=u_{m_N}(z) \left(\mathbf{1}^T \Sigma_2^{-1}\right)_i,\quad i=1,\,2.$$
For $\pro=\bprob$, one can see immediately that for $i=1,\,2$
\eq{}\label{eq:expon}
\Delta_i=\frac{u_{m_N}(z) (g(0)-g(\alpha,\,\beta))}{g(0)^2-g(\alpha,\,\beta)^2}.
\eeq{}
On the other hand, for $\bprob_N$, using the lower bound given in Assumption~\eqref{item:less},
\begin{align}
\Delta_1&= u_{m_N}(z)\frac{ \mathbb{G}(\beta)-\mathbb{G}(\alpha,\,\beta)}{\mathbb{G}(\alpha)\mathbb{G}(\beta)-\mathbb{G}(\alpha,\,\beta)^2}\ge u_{m_N}(z)\frac{ (g(0)-g(\alpha,\,\beta))+\o{(\log N)^{-1}}}{g(0)^2-g(\alpha,\,\beta)^2+\o{(\log N)^{-1}}}\nonumber\\
&=u_{m_N}(z) \frac{g(0)-g(\alpha,\,\beta)}{g(0)^2-g(\alpha,\,\beta)^2}+\o{1},\label{eq:expo}
\end{align}
with the same estimate holding for $\Delta_2$.
Observe that the same argumentation shows also that
\eq{}\label{eq:er}u_{m_N}(z)^2\,\mathbf{1}^T \Sigma_2^{-1}\mathbf 1=u_{m_N}(z)^2\frac{2(g(0)-g(\alpha,\,\beta))}{g(0)^2-g(\alpha,\,\beta)^2}+\o{1}  \eeq{}
in the $\bprob_N$ case.
Exploiting an easy upper bound on bivariate Gaussian tails (see \cite{Savage}), we consider first the $\bprob$ case and obtain
\begin{align}
&\bprob(\vr_\alpha>u_{m_N}(z), \vr_\beta>u_{m_N}(z))\le \frac1{2\pi} \frac1{|\det \Sigma_2|^{1/2}\Delta_1\Delta_2}\exp\left(-\frac{u_{m_N}(z)^2}{2}  \mathbf{1}^T \Sigma_2^{-1}\mathbf 1\right)\nonumber\\
&\stackrel{\eqref{eq:expon}}{=} \frac1{2\pi} \frac{(g(0)+g(\alpha-\beta))^2}{(g(0)^2-g(\alpha-\beta)^2)^{1/2}u_{m_N}(z)^2}\exp\left(-\frac{u_{m_N}(z)^2}{2}  \frac{2(g(0)-g(\alpha-\beta))}{g(0)^2-g(\alpha-\beta)^2}\right) \nonumber \\
&\le \frac1{4\pi \log {m_N}} \frac{\left(1+ \frac{g(\alpha-\beta)}{g(0)}\right)^{3/2}}{\left(1-\frac{g(\alpha-\beta)}{g(0)}\right)^{1/2}} {m_N}^{-\frac{2g(0)}{g(0)+g(\alpha-\beta)}}\left(4\pi \log {m_N} \right)^{\frac{g(0)}{g(0)+g(\alpha-\beta)}}\e^{-\frac{2g(0)z}{g(0)+g(\alpha-\beta)}+\o{1}}\nonumber\\
&\le \frac{\left(1+ \frac{g(\alpha-\beta)}{g(0)}\right)^{3/2}}{\left(1-\frac{g(\alpha-\beta)}{g(0)}\right)^{1/2}} {m_N}^{-\frac{2g(0)}{g(0)+g(\alpha-\beta)}}\e^{-\frac{2g(0)z}{g(0)+g(\alpha-\beta)}+\o{1}}\label{eq:thisone}
\end{align}
where in the second-to-last inequality we used $u_{m_N}(z)^2= b_{m_N}^2+ 2g(0)z+g(0)^2 z^2/b_{m_N}^2$ and the bound of $b_{m_N}^2$
$$g(0)(2\log m_N-\log \log m_N-\log 4\pi)\le b_{m_N}^2\le 2g(0)\log m_N.$$
Note that in the second line, when dealing with $\pro=\bprob_N$, we can use \eqref{eq:expo} so that a multiplicative factor $(1+\o{1})$ would appear in the fraction and an additive $\o{1}$ would appear in the exponential due to the error terms in \eqref{eq:expo}, \eqref{eq:er}, but we can drop them without influencing the asymptotic. Hence \eqref{eq:thisone} is valid under $\pro$. We make now the following claim on the variance of the measure $\bprob$ (which we will prove in a moment):
\begin{claim}\label{claim:ferragosto}
$$ \sup_{\alpha\in \Z^d\setminus\{0\}}\frac{g(\alpha)}{g(0)}< 1.$$
\end{claim}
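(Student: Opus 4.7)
The plan is to prove this by contradiction after first upgrading the supremum to a maximum. By positive semidefiniteness of $g$ (Cauchy--Schwarz), one immediately has $g(\alpha)\le g(0)$ for every $\alpha\in\Z^d$, so the supremum is at most $1$; the task is to rule out equality.

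Next I would show the supremum is actually attained. By Assumption~\eqref{item:decreasing}, $g(\alpha)\to 0$ as $\|\alpha\|\to\infty$, so for any $\eps>0$ the set $\{\alpha\in\Z^d:\,g(\alpha)\ge \eps\, g(0)\}$ is finite. If $\sup_{\alpha\neq 0} g(\alpha)\le 0$, the claim is immediate since $g(0)>0$; otherwise the supremum is positive and is realized on a nonempty finite set, hence attained at some $\alpha^\star\in\Z^d\setminus\{0\}$.

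Now assume for contradiction that $g(\alpha^\star)=g(0)$. Then
$$\Exp{(\vr_0-\vr_{\alpha^\star})^2}=2g(0)-2g(\alpha^\star)=0,$$
so $\vr_0=\vr_{\alpha^\star}$ $\bprob$-almost surely. By stationarity of $\bprob$, for every $\beta\in\Z^d$ the pair $(\vr_\beta,\vr_{\beta+\alpha^\star})$ has the same joint law as $(\vr_0,\vr_{\alpha^\star})$, so $\vr_\beta=\vr_{\beta+\alpha^\star}$ a.s. A routine induction on $k\in\Z$ (chaining the equalities $\vr_0=\vr_{\alpha^\star}=\vr_{2\alpha^\star}=\cdots$ along a countable union of null sets) then yields $\vr_0=\vr_{k\alpha^\star}$ a.s.\ for every $k\in\Z$. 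In particular $g(k\alpha^\star)=\Exp{\vr_0\vr_{k\alpha^\star}}=g(0)$ for all $k$, which contradicts $g(k\alpha^\star)\to 0$ as $|k|\to\infty$ (since $\|k\alpha^\star\|\to\infty$ and $g(0)>0$). Hence $g(\alpha^\star)<g(0)$, proving the claim.

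The only step requiring a moment of care is the propagation from a single a.s.\ identity $\vr_0=\vr_{\alpha^\star}$ to the whole arithmetic progression $\{k\alpha^\star:\,k\in\Z\}$; this is where stationarity (rather than the mere covariance structure) is genuinely used, and it is the reason why Assumption~\eqref{item:decreasing} must include stationarity and not just vanishing of $g$ at infinity.
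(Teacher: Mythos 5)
Your proposal is correct and follows essentially the same route as the paper: Cauchy--Schwarz for $g(\alpha)\le g(0)$, the observation that equality forces $\vr_0=\vr_{\alpha}$ a.s., propagation of this identity along the progression $k\alpha$ via stationarity to contradict $g(\alpha)\to 0$, and the decay of $g$ to reduce the supremum to finitely many sites (you phrase this as attainment of the supremum, the paper as a split between a ball $B(0,R)$ and its complement where the ratio is below $1/2$). The only cosmetic difference is that you get $\vr_0=\vr_{\alpha^\star}$ directly from $\Exp{(\vr_0-\vr_{\alpha^\star})^2}=0$ rather than from the equality case of Cauchy--Schwarz.
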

Now assuming it, we have that $1-\kappa:= \sup_{\alpha\in \Z^d\setminus\{0\}}g(\alpha)/g(0)\in [0,1)$.
From this we derive
\eq{}\label{eq:august}\frac{g(0)}{g(0)+g(\alpha-\beta)}\ge \frac1{2-\kappa} \qquad\text{ and } \qquad\frac{g(\alpha-\beta)}{g(0)+g(\alpha-\beta)}\le 1-\kappa.\eeq{}
We obtain thus
\begin{align*}\pro&\left( \vr_\alpha> u_{m_N}(z), \vr_\beta>u_{m_N}(z)\right)\\
&\stackrel{\eqref{eq:thisone},\,\eqref{eq:august}}{\le} \frac{(2-\kappa)^{3/2}}{\kappa^{1/2}} {m_N}^{-\frac{2}{(2-\kappa)}} \max\left( \e^{-2z}\one_{\left\{z\le 0\right\}},\, \e^{-2z/(2-\kappa)}\one_{\left\{z>0\right\}}\right).
\end{align*}
We get finally for some constants $c,\,c'>0$ depending only on $d$, $\delta$ and $\kappa$
\begin{align*}
b_2&\le c m_N s_N^d \frac{(2-\kappa)^{3/2}}{\kappa^{1/2}} {m_N}^{-\frac{2}{(2-\kappa)}}\max\left( \e^{-2z}\one_{\left\{z\le 0\right\}},\, \e^{-2z/(2-\kappa)}\one_{\left\{z>0\right\}}\right)\nonumber\\
&\le c' {m_N}^{-\frac{\kappa}{(2-\kappa)}} s_N^d \max\left( \e^{-2z}\one_{\left\{z\le 0\right\}}, \,\e^{-2z/(2-\kappa)}\one_{\left\{z>0\right\}}\right).
\end{align*}
Since $\kappa/(2-\kappa)>0$ and Assumption~(\ref{item:last}) holds, we have that $b_2=\o{1}$. What is left to conclude is to show Claim~\ref{claim:ferragosto}.
\begin{proof}[Proof of Claim~\ref{claim:ferragosto}]
The fact that $g(\alpha)\le g(0)$ follows from the Cauchy-Schwarz inequality; equality holds if and only if $\vr_{\alpha}=A\vr_{0}+B$ $\bprob$-a.~s. for some $A,\,B.$
By $\mathbf{E}\left[\vr_\alpha\right]=0$ we have $B=0$ and by $\mathbf E\left[\vr_{\alpha}^{2}\right]=\mathbf E\left[\vr_{0}^{2}\right]$ we
obtain $A=1$, in other words $\vr_\alpha=\vr_0$ a.-s. Since the field is stationary, we know
\[
\mathbf E\left[\vr_{0}\vr_{2\alpha}\right]\stackrel{assumption}{=}\mathbf E\left[\vr_{\alpha}\vr_{2\alpha}\right]=\mathbf E\left[\vr_{0}\vr_{\alpha}\right]\stackrel{assumption}{=}g(0).
\]
This implies that for $k\in\mathbb{N},$ $g(k\alpha)$ is a strictly positive constant, which contradicts the fact that $\lim_{\|\alpha\|\to+\infty}g(\alpha)=0$. For $\|\alpha\|$ large, by the asymptotics of $g(\alpha)$ we have $\sup_{\|\alpha\|>R}g(\alpha)/g(0)<1/2$
for some $R=R(1/2)$. Then let us consider the ball $B(0,R)$. There
we cannot have that there exists an $\alpha_{0}$ with $g(\alpha_{0})/g(0)=1$,
otherwise, as we have just seen, $g(k \alpha_0)$ would be constant for all $k\in \N$, so $\max_{\alpha\in B(0,R)}g(\alpha)/g(0)<1$.
We can take then
\[
\sup_{\alpha\in \Z^d\setminus\{0\}}\frac{g(\alpha)}{g(0)}\le\max\left\{ \frac12,\,\max_{\alpha\in B(0,R)}\frac{g(\alpha)}{g(0)}\right\}<1 .
\]
\end{proof}
\indent {iii.} $b_3=\sum_{\alpha\in V_N^\delta}\Exp{\left|\Exp{X_\alpha-p_\alpha\left|\right.\mathcal H_1}\right|}$.
It will be convenient to introduce another $\sigma$-algebra which strictly contains $\mathcal H_1=\sigma\left(X_\beta:\,\beta \in V_N^\delta\setminus B_\alpha \right)$, namely
$$
\mathcal H_2:=\sigma\left(\vr_\beta:\,\beta\in V_N\setminus B_\alpha\right).
$$
Using the tower property of the conditional expectation and Jensen's inequality
\begin{align*}
\Exp{\left|\Exp{X_\alpha-p_\alpha\left|\right.\mathcal H_1}\right|}&=\Exp{\left|\Exp{\Exp{X_\alpha-p_\alpha\left|\right.\mathcal H_2}\left|\right.\mathcal H_1} \right|}\\
&\le \Exp{\Exp{\left|\Exp{X_\alpha-p_\alpha\left|\right.\mathcal H_2}\right|\left|\right.\mathcal H_1}}=\Exp{\left|\Exp{X_\alpha-p_\alpha\left|\right.\mathcal H_2}\right|}.
\end{align*}
We now use the following Lemma which follows from~\citet[Lemma 4]{ofernotes} and \citet[Theorem 9.9]{Jan97}.
\begin{lemma}[Conditional laws]\label{corol:conditional}
Let $K:= V_N\setminus B_\alpha$ where $\alpha\in V_N^\delta$.  Define  $\psi_\alpha:= \vr_\alpha-\mu_\alpha,\,\alpha\in \Z^d$ as in Proposition~\ref{prop: MP}, then it follows that $\psi_\alpha$ and $\mu_\alpha$ are independent. Also the following properties hold:
\begin{enumerate}[(i)]
\item The law of $(\mu_\alpha)_{\alpha\in \Z^d}$ is that of a centered Gaussian with covariance matrix given by
\eq{*}\left(\sum_{\xi\in K}T(\alpha,\,\xi)\mathbb{G}(\beta,\,\xi)\right)_{\alpha,\,\beta\in \Z^d}
\eeq{*}
for some deterministic $T$.
\item The law of $\psi_\alpha$ under $\mathbb{P}(\cdot\,|\,\mathcal F_K)$ is that of a centered Gaussian such that $\psi_\alpha=0$ a.~s. if $\alpha\in K$, and with variance $\varp{\vr_\alpha}-\varp{\mu_\alpha}$ (in particular it is deterministic and does not depend on the values $(\vr_\alpha)_{\alpha\in K}$).
\end{enumerate}
\end{lemma}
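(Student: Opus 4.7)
The entire lemma rests on the standard fact that, for a jointly Gaussian family, the conditional expectation given a sub-$\sigma$-algebra generated by Gaussian variables coincides with the $L^{2}$-orthogonal projection onto the closed linear span of those variables, and that for jointly Gaussian random variables orthogonality in $L^{2}$ is equivalent to independence. Both ingredients are contained in the combination of \citet[Lemma 4]{ofernotes} and \citet[Theorem 9.9]{Jan97}. The plan is to set up this linear projection explicitly with $K=V_N\setminus B_\alpha$, which is a \emph{finite} subset of $\Z^d$, so that everything reduces to elementary finite-dimensional linear algebra.

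First I would fix $\alpha\in\Z^d$ and, by the projection principle above, write
\[
\mu_\alpha=\mathbb{E}[\vr_\alpha\mid\mathcal{F}_K]=\sum_{\xi\in K}T(\alpha,\xi)\,\vr_\xi,
\]
where the coefficients $T(\alpha,\xi)$ are deterministic and are determined by the normal equations $\sum_{\eta\in K}T(\alpha,\eta)\,\mathbb{G}(\eta,\xi)=\mathbb{G}(\alpha,\xi)$ for every $\xi\in K$. The existence of a solution is guaranteed because the right-hand side lies in the range of the symmetric positive semidefinite matrix $(\mathbb{G}(\eta,\xi))_{\eta,\xi\in K}$; uniqueness of $T$ is not needed for what follows, only the identity $\mu_\alpha=\sum_\xi T(\alpha,\xi)\vr_\xi$. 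This gives at once (i): since $\psi_\beta=\vr_\beta-\mu_\beta$ is orthogonal in $L^{2}$ to every $\vr_\xi,\xi\in K$, and hence to $\mu_\alpha$, we have $\mathrm{Cov}(\mu_\alpha,\mu_\beta)=\mathrm{Cov}(\mu_\alpha,\vr_\beta)=\sum_{\xi\in K}T(\alpha,\xi)\mathbb{G}(\beta,\xi)$, which is exactly the matrix displayed in the statement. The centering of $\mu_\alpha$ is inherited from $\vr_\alpha$.

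To get the independence of $\psi_\alpha$ from $\mu_\alpha$, observe that $\psi_\alpha$ is orthogonal to every element of the linear span of $(\vr_\xi)_{\xi\in K}$ by the defining property of the $L^{2}$-projection, and in particular $\mathrm{Cov}(\psi_\alpha,\mu_\alpha)=0$. Since $(\psi_\alpha,\mu_\alpha)$ is jointly Gaussian (being a linear image of the Gaussian vector $(\vr_\gamma)_{\gamma\in K\cup\{\alpha\}}$), vanishing covariance upgrades to independence. The same reasoning shows that $\psi_\alpha$ is independent of the whole $\sigma$-algebra $\mathcal{F}_K$, so its conditional law given $\mathcal{F}_K$ coincides with its unconditional law: a centered Gaussian of variance $\mathrm{Var}(\vr_\alpha)-\mathrm{Var}(\mu_\alpha)$, which follows from the orthogonal decomposition $\vr_\alpha=\mu_\alpha+\psi_\alpha$. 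When $\alpha\in K$, $\vr_\alpha$ is already $\mathcal{F}_K$-measurable, so the projection satisfies $\mu_\alpha=\vr_\alpha$ and $\psi_\alpha\equiv 0$. This proves~(ii). The only delicate point is verifying solvability of the normal equations when the Gram matrix on $K$ is degenerate; as noted, this is bypassed by working with the projection itself rather than with an inverse of the Gram matrix.
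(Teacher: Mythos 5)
Your proof is correct and follows essentially the same route as the paper, which also identifies $\mu_\alpha=\sum_{\gamma\in K}T(\alpha,\gamma)\vr_\gamma$ as the linear ($L^2$) projection onto the span of $(\vr_\xi)_{\xi\in K}$ and deduces independence from joint Gaussianity plus zero covariance. The only difference is that the paper delegates the projection formula and part (ii) to \citet[Lemma 4]{ofernotes} and \citet[Theorem 9.9]{Jan97} (writing $T$ via the inverse Gram matrix $\mathbb{G}_K^{-1}$), whereas you spell out the normal equations and, sensibly, avoid assuming invertibility of the Gram matrix.
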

\begin{proof}
{(i)} It follows from \citet[Lemma 4]{ofernotes} that
$$\mu_\alpha:= \Exp{\vr_\alpha|\mathcal F_K}=\sum_{\gamma\in K} T(\alpha,\gamma) \vr_\gamma,$$
where $T(\alpha,\,\gamma)=\sum_{\xi\in K} \mathbb{G}(\alpha,\,\xi) \mathbb{G}_K^{-1}(\xi,\,\gamma)$.   Note that for $\alpha\in K$ we have that $T(\alpha,\gamma)=\delta(\alpha,\gamma)$. A relevant observation is that $\psi_\alpha$ and $\mu_\alpha$ are independent: indeed for a $L^1$-random variable $\vr_\alpha$ and a sub-sigma-field $\mathcal F_K$, one has that $\vr_\alpha-\mathbb E[ \vr_\alpha|\mathcal F_K]$ is independent of any $\mathcal F_K$-measurable $Y$ \cite[Lemma A.55]{veleniknotes}. Now $\mu_\alpha=\sum_{\gamma\in K} T(\alpha,\gamma)\vr_\gamma$ is $\mathcal F_K$-measurable and hence is independent of $\psi_\alpha=\vr_\alpha-\mathbb E[ \vr_\alpha|\mathcal F_K]$.\\
{(ii)} The proof can be found in \citet[Theorem 9.9]{Jan97}.
\end{proof}
It is worth noticing that in some cases (for example the DGFF or in the MM) the law of $\psi_\alpha$ has a more explicit representation, and it is indeed that of a DGFF or MM with Dirichlet boundary conditions outside $B_\alpha$.\\
We will now prove a claim that will be of key importance in estimating the convergence of $b_3$.
\begin{lemma}\label{lemma:claim6} It holds that
$$\lim_{N\to+\infty}\sup_{\alpha\in V_N^{\delta}} \left(\frac{g(0)}{\varp{\psi_\alpha}}-1\right)u_{m_N}(z)^2=0.$$
\end{lemma}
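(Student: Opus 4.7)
The plan is to exploit the variance decomposition afforded by Lemma~\ref{corol:conditional} and then combine it with the quantitative control provided by Assumptions~\eqref{item:less} and~\eqref{item:last}. Since $\vr_\alpha=\mu_\alpha+\psi_\alpha$ with $\mu_\alpha$ and $\psi_\alpha$ independent, one has
\[
\varp{\psi_\alpha}=\varp{\vr_\alpha}-\varp{\mu_\alpha}.
\]
Hence the desired quantity can be rewritten as
\[
\frac{g(0)}{\varp{\psi_\alpha}}-1=\frac{g(0)-\varp{\vr_\alpha}+\varp{\mu_\alpha}}{\varp{\psi_\alpha}},
\]
so everything reduces to controlling the numerator uniformly in $\alpha\in V_N^\delta$ and keeping the denominator bounded away from zero.

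First I would dispense with the variance $\varp{\vr_\alpha}$. Under $\bprob$ stationarity gives $\varp{\vr_\alpha}=g(0)$ exactly, so the numerator is simply $\varp{\mu_\alpha}$. Under $\bprob_N$, Assumption~\eqref{item:less} applied with $\alpha=\beta\in V_N^\delta$ yields $\varp{\vr_\alpha}=g_N(\alpha,\alpha)=g(0)+\o{(\log N)^{-1}}$ uniformly in $\alpha\in V_N^\delta$, so
\[
\sup_{\alpha\in V_N^\delta}\bigl|g(0)-\varp{\vr_\alpha}+\varp{\mu_\alpha}\bigr|\le \o{(\log N)^{-1}}+\sup_{\alpha\in V_N^\delta}\varp{\mu_\alpha}.
\]
Assumption~\eqref{item:last} furnishes $\sup_{\alpha\in V_N^\delta}\varp{\mu_\alpha}=\o{(\log N)^{-2-\theta}}$, which in particular is $\o{(\log N)^{-1}}$. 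So the numerator is $\o{(\log N)^{-1}}$ uniformly, and consequently $\varp{\psi_\alpha}\to g(0)>0$ uniformly in $V_N^\delta$; in particular $\varp{\psi_\alpha}\ge g(0)/2$ for $N$ large.

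Second, I would expand $u_{m_N}(z)^2$. From the definition \eqref{eq:cs} of $b_N,a_N$ one checks that $b_{m_N}^2=2g(0)\log m_N-g(0)\log\log m_N-g(0)\log(4\pi)+\o{1}$ and $a_{m_N}b_{m_N}=g(0)$, whence
\[
u_{m_N}(z)^2=b_{m_N}^2+2g(0)z+\o{1}=\O{\log N},
\]
since $m_N\asymp N$.

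Putting the two together, for $N$ large enough and uniformly in $\alpha\in V_N^\delta$,
\[
\left(\frac{g(0)}{\varp{\psi_\alpha}}-1\right)u_{m_N}(z)^2\le \frac{2}{g(0)}\Bigl[\o{(\log N)^{-1}}+\o{(\log N)^{-2-\theta}}\Bigr]\cdot \O{\log N}=\o{1},
\]
and the corresponding lower bound is handled identically, giving the claim. The main — indeed only — subtlety is that Assumption~\eqref{item:less} a priori only guarantees a $(\log N)^{-1}$-decay of $g_N(\alpha,\alpha)-g(0)$, which just barely matches the growth rate $\log N$ of $u_{m_N}(z)^2$; no room to spare is available on this term, so one must not be sloppier than $\o{(\log N)^{-1}}$ when invoking \eqref{item:less}.
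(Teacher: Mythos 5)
Your proof is correct and follows essentially the same route as the paper's: the decomposition $\varp{\psi_\alpha}=\varp{\vr_\alpha}-\varp{\mu_\alpha}$ from Lemma~\ref{corol:conditional}, the bound $g_N(\alpha,\alpha)=g(0)+\o{(\log N)^{-1}}$ from Assumption~\eqref{item:less}, the bound $(\log N)\varp{\mu_\alpha}=\o{1}$ from Assumption~\eqref{item:last}, and $u_{m_N}(z)^2=\O{\log N}$. Your write-up is in fact slightly more explicit than the paper's about the two-sided bound and the denominator being bounded away from zero.
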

\begin{proof}
With the help of Lemma~\ref{corol:conditional} (ii) and Assumption~\eqref{item:less} we have
$$
0\le \frac{g(0)}{\varp{\psi_\alpha}}-1
\le\frac{\varp{\mu_\alpha}+\o{(\log N)^{-1}}}{g(0)-\varp{\mu_\alpha}+\o{(\log N)^{-1}}}.
$$
Using Assumption~\eqref{item:last} it follows that $(\log N) \varp{\mu_\alpha}=\o{1}$ uniformly over all $\alpha \in V_N^\delta$ and hence the claim follows since $u_{m_N}(z)^2=C\log N+\o{\log N}$ for some constant $C>0$.
\end{proof}
We proceed by showing $b_3=\o{1}.$ Let us call $\pro_{K}$ the law of $\vr$ under the conditioning of $\mathcal H_2$. Noting that $\mu$ is $\mathcal H_2$-measurable, we see that $(\psi_\alpha)_{\alpha\in\Z^d\setminus K}$ under $\pro_{K}$ has covariance structure which was explicitly stated in Lemma~\ref{corol:conditional} (ii). We write\eqa{*}
\mathbb E[|\mathbb E[X_\alpha-p_\alpha |\mathcal H_2]|]&=&\mathbb E\left[\left|\mathbb E_K\left[\one_{\left\{\psi_\alpha+\mu_\alpha\ge u_{m_N}(z)\right\}}-p_\alpha\right]\right|\right]\\
&=&\mathbb E\left[\left|\mathbb E_{K}\left[\one_{\left\{\psi_\alpha+\mu_\alpha\ge u_{m_N}(z)\right\}}-p_\alpha\right]\right|\one_{\left\{\mu_\alpha>\left(u_{m_N}(z)\right)^{-(1+\theta)}\right\}}\right]\\
&+&\mathbb E\left[\left|\mathbb E_{K}\left[\one_{\left\{\psi_\alpha+\mu_\alpha\ge u_{m_N}(z)\right\}}-p_\alpha\right]\right|\one_{\left\{\mu_\alpha\le \left(u_{m_N}(z)\right)^{-(1+\theta)}\right\}}\right]=:T_1+T_2,
\eeqa{*}
with $\theta$ as in Assumption~\eqref{item:last}.
Let us handle $T_1$ first. Using the estimate $P\left(\left|\mathcal N(0,1)\right|>a\right)\le \e^{-{a^2/2}}$ for $a>0$,
we get that
\eq{}\label{eq:rate_zero}
\pro\left(|\mu_\alpha|>\left(u_{m_N}(z)\right)^{-1-\theta} \right)\le \e^{-(u_{m_N}(z))^{-2(1+\theta)}/(2\varp{\mu_\alpha})}.\eeq{}
Hence note that, using Assumption~\eqref{item:last}, we have, for some constants $C,\,C_1>0$,
\begin{align}
\sum_{\alpha\in V_N^\delta}& \pro\left(|\mu_\alpha|>\left(u_{m_N}(z)\right)^{-1-\theta} \right)\nonumber\\
&\le C\exp\left(-\log m_N\left[C_1 \left((\log m_N)^{2+\theta} \sup_{\alpha\in V_N^\delta} \varp{\mu_\alpha}\right)^{-1} -1\right]\right)\label{eq:after}
\end{align}
where we have used that $u_N(z)\sim b_N$ and $b_N\sim C\sqrt{2\log N}$. Hence we have $\sum_{\alpha\in V_N^\delta}T_1$ tends to zero.
Note that $T_2$ can be tackled using calculations similar to $b_3$ in \cite{CCH2015} and Lemma~\ref{lemma:claim6}. For completeness we provide a few details:
\begin{align}
\mathbb E&\left[\left| \mathbb P_K(\psi_\alpha+\mu_\alpha>u_{m_N}(z))-p_\alpha\right|\one_{\left\{|\mu_\alpha|\le\left(u_{m_N}(z)\right)^{-1-\theta} \right\}}\right]\nonumber\\
&=\Exp{\left(\mathbb P_K(\psi_\alpha+\mu_\alpha>u_{m_N}(z))-p_\alpha\right)\one_{\left\{|\mu_\alpha|\le\left(u_{m_N}(z)\right)^{-1-\theta} \right\}}\one_{\left\{p_\alpha<\mathbb P_K(\psi_\alpha+\mu_\alpha>u_{m_N}(z))\right\}}}\nonumber\\
&+\Exp{\left(p_\alpha-\mathbb P_K(\psi_\alpha+\mu_\alpha>u_{m_N}(z))\right)\one_{\left\{|\mu_\alpha|\le\left(u_{m_N}(z)\right)^{-1-\theta} \right\}}\one_{\left\{p_\alpha\ge\mathbb P_K(\psi_\alpha+\mu_\alpha>u_{m_N}(z))\right\}}}\nonumber\\
&=:T_{2,1}+T_{2,2}.\label{eq:treat}
\end{align}
We treat first $T_{2,\,2}$. Under the event $\left\{|\mu_\alpha|\le\left(u_{m_N}(z)\right)^{-1-\theta} \right\}$ one obtains
{\small\eqa{}
&&p_\alpha-\mathbb P_K(\psi_\alpha+\mu_\alpha>u_{m_N}(z))\nonumber\\
&&\stackrel{\eqref{eq:Mills},\,\text{(\ref{item:less})}}{\le} \frac{\sqrt{g(0)}\e^{-\frac{u_{m_N}(z)^2}{2g(0)}}}{\sqrt{2\pi}u_{m_N}(z)}-\left(1-\left(\frac{\sqrt{\varp{\psi_\alpha}}}{u_{m_N}(z)-\mu_\alpha}\right)^{2}\right)\frac{\sqrt{\varp{\psi_\alpha}}\e^{-\frac{(u_{m_N}(z)-\mu_\alpha)^2}{2 \varp{\psi_\alpha}}}}{\sqrt{2\pi }(u_{m_N}(z)-\mu_\alpha)}\nonumber\\
&&\le  \frac{\sqrt{g(0)}\e^{-\frac{u_{m_N}(z)^2}{2g(0)}}}{\sqrt{2\pi }u_{m_N}(z)}\left(1-\left(1+\o{1}\right)\frac{\sqrt{\varp{\psi_\alpha}}u_{m_N}(z)\e^{\left(1-\frac{g(0)}{\varp{\psi_\alpha}}\right)\frac{u_{m_N}(z)^2}{2g(0)}+\frac{\mu_\alpha u_{m_N}(z)}{\varp{\psi_\alpha}}-\frac{\mu_\alpha^2}{2\varp{\psi_\alpha}}}}{\sqrt{ g(0)}(u_{m_N}(z)-\mu_\alpha)} \right)\nonumber\\
&&=\frac{\sqrt{g(0)}\e^{-\frac{u_{m_N}(z)^2}{2g(0)}}}{\sqrt{2\pi }u_{m_N}(z)}\left(1-\left(1+\o{1}\right)\frac{\sqrt{\varp{\psi_\alpha}}u_N(z)\e^{\left(1-\frac{g(0)}{\varp{\psi_\alpha}}\right)\frac{u_{m_N}(z)^2}{2g(0)}+\o{1}}}{\sqrt{ g(0)}u_{m_N}(z)(1-u_{m_N}(z)^{-2-\theta})} \right).\label{eq:here2}
\eeqa{}}
In the last line we used the fact that $|\mu_\alpha|\le\left(u_{m_N}(z)\right)^{-1-\theta}$. By bounding the indicator functions by $1$,
$$
\Exp{\left(p_\alpha-\mathbb P_K(\psi_\alpha+\mu_\alpha>u_{m_N}(z))\right)\one_{\left\{|\mu_\alpha|\le\left(u_{m_N}(z)\right)^{-1-\theta} \right\}}\one_{\left\{p_\alpha\ge\mathbb P_K(\psi_\alpha+\mu_\alpha\le u_{m_N}(z))\right\}}}\le\eqref{eq:here2}.
$$
Now
\eq{}\label{eq:tap}
b_3\le \sum_{\alpha\in V_N^\delta}(T_1+T_2)\stackrel{\eqref{eq:after}}\le \sum_{\alpha\in V_N^\delta}T_1 +\o{1}=\sum_{\alpha\in V_N^\delta}T_{2,1}+\sum_{\alpha\in V_N^\delta}T_{2,2}+\o{1}.
\eeq{}
By \eqref{eq:here2} and Lemma~\ref{lemma:claim6},
\eqa{}\label{eq:tapp}\sum_{\alpha\in V_N^\delta}T_{2,2}\le m_N\frac{\sqrt{g(0)}\e^{-\frac{u_{m_N}(z)^2}{2g(0)}}}{\sqrt{2\pi }u_{m_N}(z)}\o{1}=\e^{-z+\o{1}}\o{1}.\label{eq:T_12} \eeqa{}
Analogously, $ \sum_{\alpha\in V_N^\delta}T_{2,\,1}=\o{1}$.
Plugging \eqref{eq:tapp} in \eqref{eq:tap}, one obtains $b_3=\o{1}$.
The argument to obtain uniform convergence is an immediate application of P\'olya's continuity Theorem
\cite[Satz I]{Polya}, since the limiting distribution function $\exp(-\e^{-z})$ is continuous.

\subsection{Proof of Theorem~\ref{thm:zero:pp}}
\begin{proof}
The proof follows ideas similar to \cite{CCH2015b} and hence we shall only briefly sketch the steps.

{\bf Step 1: reduction to bulk.}  For $\delta>0$, recall that the bulk is denoted by $V_N^\delta$. Let us denote by
$$\eta_n^\delta(\cdot)= \sum_{\alpha\in V_N^\delta} \varepsilon_{\left(\frac{\alpha}{n}, \frac{\vr_\alpha-b_N}{a_N}\right)}(\cdot).$$
It is well-known that $\mathcal M_p(E)$ is a complete, separable metric space endowed with a metric $d_p$. First we show that for any $\eps>0$,
\begin{equation}\label{eq:pp:joint}
\lim_{\delta\to 0}\limsup_{n\to+\infty}\pro\left[d_p\left(\eta_n, \eta_n^\delta\right)>\eps\right]=0.
\end{equation}
To prove~\eqref{eq:pp:joint} it is enough to show that, for any continuous, positive, compactly supported function $f$ on $E$ we have
$$\lim_{\delta\to 0}\limsup_{n\to+\infty}\pro\left[\left|\eta_n(f)- \eta_n^\delta(f)\right|>\eps\right]=0.$$
Since $f$ is compactly supported we can choose $z_0\in \R$ such that the support of $f$ is contained in $[0,\,1]\times (z_0,\,+\infty)$. Therefore
\begin{align*}
\Exp{\left|\eta_n(f)- \eta_n^{\delta}(f)\right|}&= \Exp{\left|\sum_{\alpha\in V_N\setminus V_N^{\delta} } f\left(\frac{\alpha}{n}, \frac{\vr_\alpha-b_N}{a_N}\right)\one_{\left\{\frac{\vr_\alpha-b_N}{a_N}>z_0\right\}}\right|}\\
&\le \sup_{z\in E}|f(z)| \sum_{\alpha\in V_N\setminus V_N^\delta} \pro\left[ \vr_\alpha>a_Nz_0+b_N\right]\\
&\overset{\eqref{eq:Mills}}\le C(1-(1-2\delta)^d)\e^{-z_0}
\end{align*}
Hence~\eqref{eq:pp:joint} follows by taking $\delta\to 0$.

{\bf Step 2: convergence together.} Let us denote by $\eta^\delta$ a Poisson random measure with intensity $\De t_{|_{[\delta,1-\delta]^d}}\otimes \left(\e^{-x}\De x\right)$ on $E$. Then it follows from the proof of Theorem 2 of \cite{CCH2015b} that $\eta^\delta\overset{d}\to \eta$. By \citet[Theorem 3.5]{ResnickHeavy} to complete the proof of Theorem~\ref{thm:zero:pp} it is enough to show that for any fixed $\delta>0$, $\eta_n^\delta\overset{d}\rightarrow \eta^\delta$ as $n\to+\infty$.

{\bf Step 3: Kallenberg's conditions.} To show the convergence of $\eta_n^\delta$ to $\eta^\delta$, it is enough to show the following two conditions due to Kallenberg \cite[Theorem 4.7]{KallenbergRan}.
 \begin{enumerate}
\item[i)] For any $A$ bounded rectangle in $[0,1]^d$ and $R=(x,y]\subset (-\infty,+\infty]$
$$\E[ \eta_n^\delta( A\times (x,y])]\to \E[ \eta^\delta( A\times (x,y])]=\left|A\cap [\delta,1-\delta]^d\right|( \e^{-x}-\e^{-y}).$$
We adopt the convention $\e^{-\infty}=0$ and the notation $|A|$ for the Lebesgue measure of $A$.
\item[ii)] For all $k\ge 1$, $A_1,\, A_2,\,\ldots,\, A_k$ disjoint rectangles in $[0,1]^d$ and $R_1,\,R_2,\,\ldots, \,R_k$, each of which is a finite union of disjoint intervals of the type $(x,\,y] \subset (-\infty,+\infty]$,
\begin{align}
&\prob\left( \eta_n^\delta( A_1\times R_1)=0,\, \ldots,\, \eta_n^\delta(A_k\times R_k)=0\right)\nonumber\\
&\to \prob\left(\eta^\delta( A_1\times R_1)=0, \,\ldots,\, \eta^\delta(A_k\times R_k)=0\right)=\exp\left(-\sum_{j=1}^k |A_j\cap[\delta,1-\delta]^d| \omega\left(R_j\right)\right)\label{eq:cond_two}
\end{align}
where $\omega(\De z):=\e^{-z}\De z$.
\end{enumerate}
Note that the first condition follows from~\eqref{eq:Mills} and the expansion of $(a_Nz+b_N)^2$, $z\in \R$. For the second condition we use Theorem~\ref{thm:AGG2}. To apply the result, let us denote by $\I_j:=n A_j\cap V_N^\delta$ and $\I=\I_1\cup\cdots \cup \I_k$. We are setting $B_\alpha:=B\left(\alpha,\,s_N\right)\cap \I$ where $s_N$ is as in Condition~\eqref{item:last}.  Let $X_\alpha=\one_{\left\{\frac{\vr_\alpha-b_N}{a_N}\in R_j\right\}}$ if $\alpha\in \I_j$ and $p_\alpha=\pro\left[ \frac{\vr_\alpha-b_N}{a_N}\in R_j\right]$. Let $W_j=\sum_{\alpha\in \I_j}X_\alpha$ and $Z_j$ be as in Theorem~\ref{thm:AGG2}. Then the left hand side of~\eqref{eq:cond_two} is given by $\pro\left[ W_1=0,\,\ldots, \,W_k=0\right]$ and the limit on the right side is $\prob\left[Z_1=0,\ldots, \,Z_k=0\right]$. Hence to show~\eqref{eq:cond_two} we only need to show that $b_1, b_2$ and $b_3$ go to zero. Since the proof of this fact is similar to the proof of Theorem~\ref{thm:zero} we leave out the details to avoid repetitions.
\end{proof}

\section{Proof of Assumptions for examples in Section~\ref{sec:examples}}\label{sec:proof:examples}

\subsection{Proofs of basic properties of membrane model}\label{sec: proof membrane}
We introduce some well-known notations of stopping times for the simple random walk $S_m$ which will be used throughout this section. Let $K$ be a subset of $\Z^d$. Let us recall $H_K:=\inf\{m\ge 0: \,S_m\in K\}$  and $\tau_K:=\inf\{ m\ge 0: \,S_m\notin K\}$ to be the first entrance time and first exit time for the walk.
In this section we show that the MM satisfies the properties of Theorem~\ref{thm:zero}. The following lemma shows monotonicity of the variance which becomes crucial in the proof of Assumption~(\ref{item:last}).
\begin{lemma}[Monotonicity of variances]
 \label{lemma:monotonicity_variances}
Let $A\subseteq B\Subset \Z^d.$ Then for all $\alpha\in A$
$$G_A(\alpha,\alpha)\leq G_B(\alpha,\alpha).$$
\end{lemma}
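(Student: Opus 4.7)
The plan is to derive the monotonicity as an immediate consequence of the finite-volume spatial Markov property of the membrane model, Proposition~\ref{prop: MP}(a), applied with the ambient volume taken to be the \emph{larger} set $B$ and the ``inner'' set taken to be $A$. The underlying heuristic is the elementary law of total variance: conditioning on the values of the $B$-field on $B \setminus A$ can only reduce the variance at $\alpha \in A$, and the Markov property identifies what remains with the membrane model on $A$ itself.

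Concretely, let $(\psi_\beta)_{\beta \in \Z^d}$ be distributed under $\bprob_B$, so that $\psi_\beta \equiv 0$ off $B$ and $\varp{\psi_\alpha} = G_B(\alpha,\alpha)$ for $\alpha \in B$. Applying Proposition~\ref{prop: MP}(a) with the dictionary $V_N \leftrightarrow B$ and (inner set) $B \leftrightarrow A$, one obtains for every $\alpha \in A$ a decomposition
\begin{equation*}
\psi_\alpha \;\stackrel{d}{=}\; \Exp{\psi_\alpha \,|\, \mathcal F_A} + \psi'_\alpha,
\end{equation*}
where $\mathcal F_A := \sigma(\psi_\gamma : \gamma \in B \setminus A)$, the variable $\psi'_\alpha$ is independent of $\mathcal F_A$ under $\bprob_B$, and $(\psi'_\alpha)_{\alpha \in A}$ is distributed as the membrane model with zero boundary conditions outside $A$; in particular $\varp{\psi'_\alpha} = G_A(\alpha,\alpha)$.

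Since $\Exp{\psi_\alpha \,|\, \mathcal F_A}$ is $\mathcal F_A$-measurable and hence independent of $\psi'_\alpha$, taking variances in the above decomposition yields
\begin{equation*}
G_B(\alpha,\alpha) \;=\; \varp{\psi_\alpha} \;=\; \varp{\Exp{\psi_\alpha \,|\, \mathcal F_A}} \,+\, G_A(\alpha,\alpha) \;\ge\; G_A(\alpha,\alpha),
\end{equation*}
which is the claimed monotonicity. I do not anticipate any real obstacle: the only points deserving care are that Proposition~\ref{prop: MP}(a) may indeed be invoked with ambient volume $B$, which is immediate from $B \Subset \Z^d$, and that conditioning on $\{\psi_\gamma : \gamma \in B \setminus A\}$ together with the automatic vanishing of $\psi$ off $B$ forces $\psi'$ to vanish on all of $A^{\mathrm c}$, correctly identifying its law with $\bprob_A$ rather than with the membrane model on some intermediate set.
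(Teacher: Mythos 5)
Your proof is correct and follows exactly the paper's own argument: both apply the finite-volume Markov property \eqref{eq:cip13} with ambient volume $B$ and inner set $A$, then take variances in the resulting orthogonal decomposition to conclude $G_B(\alpha,\alpha)-G_A(\alpha,\alpha)=\varp{\mathbf E_B[\psi_\alpha|\mathcal F_{B\setminus A}]}\ge 0$.
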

\begin{proof} We recall \eqref{eq:cip13}, that is, for a MM $\psi$ under $\bprob_B$
\[
 \{\psi_\alpha\}_{\alpha\in A} \stackrel{d}{=} \left\{\mathbf E_B\left[\psi_\alpha|\mathcal F_{B\setminus A}\right]+ \psi'_\alpha\right\}_{\alpha\in A}
\]
where $\psi'$ has the law of a MM on $A$ with zero boundary conditions in $B\setminus A$. Therefore
\[
G_B(\alpha,\,\alpha)-G_A(\alpha,\,\alpha)=\mathbf E_B\left[(\mathbf E_B\left[\psi_\alpha|\mathcal F_{B\setminus A}\right])^2\right]\ge 0.
\]
\end{proof}
The next lemma shows the Markov decomposition of the covariance function in terms of random walks.
\begin{proposition}\label{prop:G_decomposition}
 Let $A\subset \Z^d$, $A^{\mathrm c}:=\Z^d\setminus A$ and let $G$ be as in \eqref{eq:cov_membrane}. The equality
 \[
  G(\alpha,\,\beta)= \overline G_A(\alpha,\,\beta)+\Ex_\alpha\left(G(S_{H_{A^{\mathrm c}}},\,\beta)\one_{\{H_{A^{\mathrm c}}<+\infty\}}\right)+\Ex_\alpha\left( \one_{\{H_{A^{\mathrm c}}<+\infty\}} H_{A^{\mathrm c}} \Gamma(S_{H_{{A^{\mathrm c}}}},\beta)\right)
 \]
holds for all $\alpha,\,\beta\in \Z^d$. Here we denote $\overline G_A(\alpha,\beta)= \sum_{k=0}^{+\infty} (k+1)\mathrm P_\alpha\left(S_k=\beta, k< \tau_A \right)$ and $\Gamma(\alpha,\beta)=\sum_{k=0}^{+\infty} \mathrm P_\alpha( S_k=\beta)$.
\end{proposition}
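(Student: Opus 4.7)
The plan is to start from the random-walk representation of $G$ derived in \eqref{eq:cov_membrane},
\[
G(\alpha,\,\beta)=\sum_{m\ge 0}(m+1)\,\mathrm P_\alpha(S_m=\beta),
\]
and to split the sum according to whether the walk $S$ has already left $A$ by time $m$ or not. Setting $T:=H_{A^{\mathrm c}}=\tau_A$, the key decomposition is
\[
(m+1)\,\mathrm P_\alpha(S_m=\beta)=(m+1)\,\mathrm P_\alpha(S_m=\beta,\,m<T)+(m+1)\,\mathrm P_\alpha(S_m=\beta,\,m\ge T,\,T<+\infty).
\]
Summing over $m$ the first term immediately yields $\overline G_A(\alpha,\,\beta)$ by definition.

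For the second term I would apply the strong Markov property at $T$. Writing $m=T+\ell$ with $\ell\ge 0$, one has
\[
\sum_{m\ge 0}(m+1)\,\mathrm P_\alpha(S_m=\beta,\,m\ge T,\,T<+\infty)=\Ex_\alpha\!\left[\one_{\{T<+\infty\}}\sum_{\ell\ge 0}(T+\ell+1)\,\mathrm P_{S_T}(\tilde S_\ell=\beta)\right],
\]
where $\tilde S$ is an independent copy of $S$ started at $S_T$. Now the crucial algebraic step is the splitting $T+\ell+1=T+(\ell+1)$, which separates the inner sum into two pieces: the piece weighted by $T$ gives $T\sum_{\ell\ge 0}\mathrm P_{S_T}(\tilde S_\ell=\beta)=T\,\Gamma(S_T,\,\beta)$, while the piece weighted by $\ell+1$ gives $\sum_{\ell\ge 0}(\ell+1)\,\mathrm P_{S_T}(\tilde S_\ell=\beta)=G(S_T,\,\beta)$ (invoking \eqref{eq:cov_membrane} again at the new starting point). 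Putting everything together produces exactly the two expectation terms on the right-hand side of the identity.

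I do not expect any real obstacle beyond bookkeeping: one should only check that Tonelli/Fubini-type rearrangements are legitimate, which is immediate from non-negativity of all summands, and that the degenerate boundary case $\alpha\notin A$ (where $T=0$, $\overline G_A(\alpha,\,\beta)=0$ and the $\Gamma$-term vanishes) is consistent, which it is since the $G$-term then reduces to $G(\alpha,\,\beta)$. The only conceptual point worth highlighting is why the extra $T\cdot\Gamma(S_T,\,\beta)$ term appears, as opposed to the more familiar purely-harmonic last-exit decomposition: this is the signature of the ``$(m+1)$" weight characteristic of the bilaplacian Green's function and reflects the fact that $G=\Delta^{-1}\Gamma$, so that extending a path by $T$ extra steps picks up an additional $\Gamma$-mass proportional to $T$.
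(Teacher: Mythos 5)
Your proposal is correct and follows essentially the same route as the paper's proof: split the sum defining $G$ at the exit time $\tau_A=H_{A^{\mathrm c}}$, re-index the post-exit part, apply the strong Markov property, and decompose the weight $(k+\tau_A+1)=(k+1)+\tau_A$ to produce the $G$- and $\Gamma$-terms respectively. The only difference is notational (you invoke the strong Markov property directly where the paper phrases it via the shift operator $\Theta_{\tau_A}$), and your remarks on Tonelli and the degenerate case $\alpha\notin A$ are accurate.
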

\begin{proof}
 Let $\Theta_n$ be the canonical time shift on the space of nearest neighbor trajectories. Then we see that one can write, by Fubini's theorem and \eqref{eq:cov_membrane},
 \begin{align*}
 G(\alpha,\beta)&=\Ex_{\alpha}\left( \sum_{k=0}^{+\infty} (k+1) \one_{\{S_k=\beta\}}\right)\\
&=\Ex_\alpha\left(\sum_{k=0}^{\tau_A-1}(k+1)\one_{\{S_k=\beta\}}\right)+\Ex_\alpha\left(\sum_{k=\tau_A}^\infty(k+1)\one_{\{S_k=\beta\}} \one_{\{\tau_A<+\infty\}}\right)\\
&=\sum_{k=0}^{+\infty} (k+1) \mathrm P_\alpha( S_k=\beta, k<\tau_A)+\Ex_\alpha\left(\one_{\{\tau_A<+\infty\}}\sum_{k=0}^\infty(k+\tau_A+1)\one_{\{S_{k+\tau_A}=\beta\}}\right)\\
&=\overline G_A(\alpha,\beta)+ \Ex_\alpha\left( \one_{\{\tau_A<+\infty\}} \left( \sum_{k=0}^{+\infty} (k+1) \one_{\{S_k=\beta\}}\right)\circ \Theta_{\tau_A}\right)+\\
&+\Ex_\alpha\left(\tau_A \one_{\{\tau_A<+\infty\}}\left(\sum_{k=0}^{+\infty}\one_{\{S_k=\beta\}}\right)\circ \Theta_{\tau_A}\right)\\
&=\overline G_A(\alpha,\beta)+ \Ex_\alpha\left( \one_{\{\tau_A<+\infty\}} \Ex_{S_{\tau_A}}\left( \sum_{k=0}^{+\infty} (k+1) \one_{\{S_{k}=\beta\}}\right)\right)+\\
&+\Ex_\alpha\left(\tau_A \one_{\{\tau_A<+\infty\}}\Ex_{S_{\tau_A}}\left(\sum_{k=0}^{+\infty}\one_{\{S_k=\beta\}}\right)\right)\\
&=\overline G_A(\alpha,\beta)+ \Ex_\alpha\left( \one_{\{\tau_A<+\infty\}} G(S_{\tau_A},\beta)\right)+\Ex_\alpha\left(\tau_A \one_{\{\tau_A<+\infty\}}\Gamma(S_{\tau_A},\beta)\right).
  \end{align*}
Now use $H_{A^{\mathrm c}}=\tau_A$ to complete the proof.
\end{proof}
It now becomes important to understand how $G_{V_N}$ and $\overline G_{V_N}=:\overline G_N$ differ. The non-trivial answer to this query says that essentially in the bulk they turn out to be close. This was derived in the following
\begin{theorem}[{\citet[Corollary 2.5.5]{Kurt_thesis}}] \label{thm:noemi} For $d\geq 4$ and $0<\delta<1$, there exists a constant $c_d=c_d(\delta)$ such that
\[
{\sup_{\alpha,\,\beta\in V_N^\delta}\left| G_N(\alpha,\beta)-\overline G_N(\alpha, \beta)\right|}\le c_d{N^{\frac{4-d}{d}}}   .
\]
\end{theorem}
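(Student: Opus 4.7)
The strategy is to write
\[
G_N - \overline G_N = (G - \overline G_N) - (G - G_N) =: v_N - u_N,
\]
and bound each correction separately by $O(N^{(4-d)/d})$ uniformly on $V_N^\delta\times V_N^\delta$.

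Step~1 (bound on $v_N$). Proposition~\ref{prop:G_decomposition} with $A=V_N$ gives, since $H_{V_N^c}<\infty$ $\mathrm P_\alpha$-almost surely for $\alpha\in V_N$,
\[
v_N(\alpha,\beta) = \Ex_\alpha\!\left[G(S_{H_{V_N^c}},\beta)\right] + \Ex_\alpha\!\left[H_{V_N^c}\,\Gamma(S_{H_{V_N^c}},\beta)\right].
\]
For $\alpha,\beta\in V_N^\delta$ the exit location $S_{H_{V_N^c}}\in V_N^c$ lies at $\ell^\infty$-distance at least $\delta N^{1/d}$ from $\beta$, so Lemma~\ref{lemma: covariance:mm} bounds the first integrand pointwise by $C(N^{1/d})^{4-d}$. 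For the second, the classical bound $|\Gamma(x)|\leq C\|x\|^{2-d}$ for $d\geq 3$ combined with $\Ex_\alpha[H_{V_N^c}]\leq C N^{2/d}$ (expected exit time from a box of side $N^{1/d}$) yields $C\,N^{2/d}\cdot N^{(2-d)/d}=C\,N^{(4-d)/d}$.

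Step~2 (bound on $u_N$). With $\beta$ fixed, $u_N(\cdot,\beta)$ is \emph{biharmonic} on $V_N$ (both $G$ and $G_N$ solve $\Delta^2\,\cdot\,=\delta(\cdot,\beta)$ inside $V_N$), and on the double shell $\partial_2 V_N$ it agrees with $G(\cdot,\beta)$ because $G_N$ vanishes there. Every $\gamma\in\partial_2 V_N$ lies at distance $\geq\delta N^{1/d}$ from $\beta\in V_N^\delta$, so Lemma~\ref{lemma: covariance:mm} supplies
\[
\sup_{\gamma\in\partial_2 V_N}|u_N(\gamma,\beta)|\leq C N^{(4-d)/d}.
\]
I would propagate this boundary bound into the interior via a discrete biharmonic Poisson representation
\[
u_N(\alpha,\beta)=\sum_{\gamma\in\partial_2 V_N}P_N(\alpha,\gamma)\,G(\gamma,\beta),
\]
where $P_N$ solves the corresponding Dirichlet problem for $\Delta^2$ on $V_N$. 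Provided one has the uniform $L^1$-control $\sum_\gamma|P_N(\alpha,\gamma)|\leq C$ on $\alpha\in V_N^\delta$, one concludes $|u_N(\alpha,\beta)|\leq c_d N^{(4-d)/d}$, and the triangle inequality together with Step~1 yields the claim.

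The main obstacle is the sharp $L^1$-control of the biharmonic Poisson kernel $P_N$: because $\Delta^2$ is fourth-order it does not obey the usual maximum principle, so the bound is not automatic. A tractable route is to factor through the Laplacian: setting $w_N(\cdot,\beta):=-\Delta u_N(\cdot,\beta)$ reduces the quartic Dirichlet problem to a pair of second-order ones whose Poisson kernels are classical SRW harmonic measures on $V_N$, for which uniform $L^1$-estimates are available. Monotonicity of variances (Lemma~\ref{lemma:monotonicity_variances}) together with the sharp asymptotics of Lemma~\ref{lemma: covariance:mm} then close the estimate and force the dimensional threshold $d\geq 4$ at which the discrete Bilaplacian potential theory becomes tractable.
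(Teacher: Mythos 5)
This statement is not proved in the paper at all: it is imported verbatim from \citet[Corollary 2.5.5]{Kurt_thesis}, so there is no internal argument to compare against. Judged on its own terms, your attempt has a genuine gap in Step~2, and the result you are trying to re-derive is essentially equivalent to the part you leave open.

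Step~1 is fine for $d\ge 5$ (it is the same computation the paper performs when verifying Condition~(A2)), but note that for $d=4$ the infinite-volume Green's function $G=\Delta^{-2}$ does not exist ($\sum_m (m+1)\mathrm P_0(S_m=0)$ diverges), so the decomposition $G_N-\overline G_N=(G-\overline G_N)-(G-G_N)$ cannot even be written down in the borderline dimension the theorem covers. The more serious problem is Step~2. You correctly identify that the Bilaplacian has no maximum principle, so the $L^1$ bound $\sum_\gamma|P_N(\alpha,\gamma)|\le C$ on the biharmonic Poisson kernel is exactly what is missing --- but your proposed fix, factoring $\Delta^2$ through the Laplacian, does not apply to $u_N=G-G_N$. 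The factorization $\Delta^2=\Delta\circ\Delta$ with two second-order Dirichlet problems corresponds to \emph{Navier} boundary conditions ($u=0$ and $\Delta u=0$ on a single layer), and the Green's function of that problem is precisely $\overline G_N(\alpha,\beta)=\sum_\gamma\Gamma_N(\alpha,\gamma)\Gamma_N(\gamma,\beta)$. The membrane model's $G_N$ instead satisfies the genuine Dirichlet condition $G_N(\cdot,\beta)=0$ on the double layer $\partial_2 V_N$, and this double-layer condition does not decouple into two second-order problems. In other words, the whole content of Kurt's Corollary 2.5.5 is that the Navier and Dirichlet bilaplacian Green's functions agree in the bulk up to $O(N^{(4-d)/d})$; your Step~2 restates this as the unproved ``$L^1$-control of $P_N$'' and then gestures at a reduction that only reproduces $\overline G_N$, not $G_N$. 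Monotonicity of variances (Lemma~\ref{lemma:monotonicity_variances}) cannot close this either: it gives a one-sided, on-diagonal comparison, whereas the theorem requires a two-sided uniform bound off the diagonal. The actual proof in the thesis goes through quantitative estimates on discrete biharmonic functions and a comparison with the continuum biharmonic Green's function, which is a substantial argument you would need to supply.
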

\subsection{Proofs of Assumptions for the membrane model}
We are now ready to prove the results for the MM by showing that the assumptions of Theorem~\ref{thm:zero} hold. For the membrane model we take $s_N= (\log N)^{T}$ with $T>(2+\theta)/(d-4)$ for some $\theta>0$.

\begin{proof}[Proof of Assumptions in Theorem~\ref{thm:zero}]
Condition~(\ref{item:decreasing}) follow immediately from discussions in Subsection~\ref{subsec:MM} and Lemma~\ref{lemma: covariance:mm}.

{\bf Condition (\ref{item:less}): }
Observe that from the proof of \citet[Proposition 2.1.1]{Kurt_thesis} it follows that $G_N(\alpha,\alpha)\le \overline G_{N+1}(\alpha,\alpha)$. By the representation $\overline G_{N+1}(\alpha,\alpha)=\sum_{\beta\in V_{N+1}} \Gamma_{N+1}(\alpha, \beta)\Gamma_{N+1}(\beta,\alpha)$ and the fact that $\Gamma_N(\alpha,\beta)\le \Gamma(\alpha,\beta)$, the upper bound follows.
Now we show that there exists a constant $C= C(d,\delta)$ such that for $\alpha, \beta\in V_N^\delta$,
$$G_N(\alpha,\beta)\ge G(\alpha,\beta)- C N^{\frac{4-d}{d}}.$$
Let us write $G_N(\alpha,\beta)= G(\alpha,\beta)-\bR (\alpha,\beta)$ where $\bR(\alpha,\beta):= G(\alpha,\beta)-G_N(\alpha,y) $. First we look at the error $\bR(\alpha,\beta)$. Note that
\begin{equation}\label{eq:BR}
\bR(\alpha,\beta)= ( G(\alpha,\beta)- \overline G_N(\alpha,\beta))+ (\overline G_N(\alpha,\beta)- G_N (\alpha,\beta)).
\end{equation}
The second summand in~\eqref{eq:BR}, thanks to Theorem~\ref{thm:noemi}, is bounded by $c(d,\delta) N^{\frac{4-d}{d}}$. To tackle the first term we use Proposition~\ref{prop:G_decomposition}:
\begin{align}
G(\alpha,\beta)-\overline G_N(\alpha,\beta) &\nonumber\\
&= \sum_{\gamma\in \partial V_N} \mathrm P_\alpha\left[ H_{\Z^d\setminus V_N}<+\infty, S_{H_{\Z^d\setminus V_N}}=\gamma\right] G(\gamma,\beta)\nonumber\\
&+\sum_{\gamma\in \partial V_N} \Ex_\alpha\left[ H_{\Z^d\setminus V_N}\one_{\{H_{\Z^d\setminus V_N}<+\infty\}} \one_{\left\{S_{H_{\Z^d\setminus V_N}}=\gamma\right\}}\right]\Gamma(\gamma,\beta).\label{eq:t}
\end{align}

Note that since $\alpha,\beta \in V_N^\delta$ we have from Lemma~\ref{lemma: covariance:mm} that, if $N$ is large enough,
$$\sum_{\gamma\in \partial V_N} \mathrm P_\alpha\left[ H_{\Z^d\setminus V_N}<\infty, S_{H_{\Z^d\setminus V_N}}=\gamma\right] G(\gamma,\beta)\le 2\eta_2\sup_{\gamma\in \partial V_N} \|\gamma-\beta\|^{4-d}\le C \left(\delta N^{1/d}\right)^{4-d}.$$

For the other term in \eqref{eq:t} first note that $M_n:=\|S_n\|^2-n$ is a martingale \cite[Exercise 1.4.3]{Lawler} and that $H_{\Z^d\setminus V_N}=\tau_{V_N}=:\tau_N$ under $\mathrm P_\alpha$. Following the idea of \citet{Lawler} before Equation~(1.21), we apply the optional sampling theorem which yields
\[
\Ex_\alpha\left[\|S_{\tau_N}\|^2\right]-\Ex_\alpha\left[ \tau_{N}\right]=\Ex_\alpha\left[M_{\tau_N}\right]=\Ex_\alpha\left[M_0\right]=\|\alpha\|^2
\]
so that
\begin{align*}
\Ex_\alpha&\left[ H_{\Z^d\setminus V_N}\one_{\left\{H_{\Z^d\setminus V_N}<+\infty\right\}}\right]\le \Ex_\alpha\left[ \tau_{N}\right]=\Ex_\alpha\left[\|S_{\tau_{N}}\|^2\right]-\|\alpha\|^2.
\end{align*}
Since $S_{\tau_{N}}\in \partial V_N$
\eq{}\label{eq:marti}
\Ex_\alpha\left[\|S_{\tau_{N}}\|^2\right]-\|\alpha\|^2\le c(d,\,\delta)N^{2/d}.
\eeq{}
Using this we have
\begin{align*}
\sum_{\gamma \in \partial V_N} \Ex_\alpha&\left[ H_{\Z^d\setminus V_N}\one_{\left\{H_{\Z^d\setminus V_N}<+\infty\right\}} \one_{\left\{S_{H_{\Z^d\setminus V_N}}=\gamma\right\}}\right]\Gamma(\gamma,\beta)\\
&\le \sup_{\gamma\in \partial V_N}\Gamma(\gamma,\beta)\Ex_\alpha\left[ H_{\Z^d\setminus V_N}\one_{\left\{H_{\Z^d\setminus V_N}<+\infty\right\}} \right] \\
&\le C(d,\,\delta)  N^{\frac{2-d}{d}}N^{2/d}\le C(d,\,\delta)N^{\frac{4-d}{d}}.
\end{align*}
Combining these inequalities we have that for $\alpha\in V_N^\delta$, $\beta\in V_N^\delta$,
$\bR(\alpha,\beta) \le C(\delta, d) N^{\frac{4-d}{d}}$ and hence Condition~(\ref{item:less}) is proved.

{\bf Condition~(\ref{item:last}) under $\bprob$}: fix $K:=V_N\setminus B(\alpha,\,s_N)$, $U:=\Z^d\setminus K=(\Z^d\setminus V_N)\cup B(\alpha,\,s_N)$. Note that in the decomposition in Lemma~\ref{prop: MP} we have that $\mu_\alpha$ and $\psi_\alpha$ are independent. Also we know that $\psi_\alpha$ is distributed as $\bprob_U$ by Proposition~\ref{prop: MP}. Hence we have
\begin{align}
\var{\mu_\alpha}&=\var{\vr_\alpha}-\var{\psi_\alpha}\\
&= G(0)- G_U(\alpha,\alpha)\nonumber\\
&=\left(G(0)-\overline G_U(\alpha,\alpha)\right)+ \left(\overline G_U(\alpha,\alpha)- G_U(\alpha,\alpha)\right)=: E_1+E_2\label{eq:chiarini}
\end{align}
where $\overline G_U(\alpha,\alpha)=\sum_{k\ge 0}(k+1) \mathrm P_\alpha[ S_k=\alpha, k< H_K]$.  First we deal with $E_1$ using Proposition~\ref{prop:G_decomposition}:
\begin{align*}
&G(0)-\overline G_U(\alpha,\alpha)=\\
&=\sum_{\gamma\in K} \mathrm P_\alpha[ H_K<+\infty, S_{H_K}=\gamma] G(\gamma,\alpha)+\sum_{\gamma\in K} \Ex_\alpha\left[ H_K\one_{\left\{H_K<+\infty\right\}} \one_{\left\{S_{H_K}=\gamma\right\}}\right]\Gamma(\gamma,\alpha).
\end{align*}
One notes that
\begin{equation}\label{eq: E_1}
E_1\le \sup_{\gamma\in K} G(\gamma,\alpha)+\sup_{\gamma\in K} \Gamma(\gamma,\alpha) \Ex_\alpha\left[ H_K\one_{\left\{H_K<+\infty\right\}} \right].\end{equation}
Since the SRW is transient in $d\ge 5$ we have $\tau_{B_\alpha}<+\infty$ $\mathrm{P}_\alpha$-almost surely.  Using the same idea which led to \eqref{eq:marti} we get
\begin{align*}
\Ex_\alpha\left[ H_K\one_{\{H_K<+\infty\}}\right]&=\Ex_\alpha\left[ \tau_{B_\alpha}\one_{\{\tau_{B_\alpha}<+\infty\}}\right]\le C(d,\,\delta)s_N^{2}.
\end{align*}
Hence plugging this estimate in~\eqref{eq: E_1} one gets that
$$E_1\le C\left(s_N^{4-d} + s_N^{2-d} s_N^{2}\right)= C s_N^{4-d}.$$
Note that this gives a bound on $E_1$ and hence we are left with $E_2= \left(\overline G_U(\alpha,\alpha)- G_U(\alpha,\alpha)\right)$. Note that $B_\alpha\subsetneq U=(\Z^d\setminus V_N)\cup B(\alpha,s_N)$. Since we know that variances increase as shown in Lemma~\ref{lemma:monotonicity_variances} the inequality $G_{B_\alpha}(\alpha,\alpha)\le G_U(\alpha,\alpha)$ holds. Actually it follows that for $\alpha \in V_N^\delta$, we have $\overline G_U(\alpha,\alpha)=\overline G_{B_\alpha}(\alpha,\alpha)$. Indeed, using the fact that $B(\alpha,s_N)\subsetneq V_N$ (that is, for a walk starting at $\alpha$, $\tau_U= \tau_{B_\alpha}$) we have
 $$\overline G_U(\alpha, \alpha)= \overline G_{B_\alpha}(\alpha,\alpha)=\sum_{n\ge 0} (n+1) \mathrm P_\alpha\left[ S_n=\alpha, n< \tau_{B_\alpha}\right]$$
We need to now look at $\overline G_{B_\alpha}(\alpha,\alpha)- G_{B_\alpha}(\alpha,\alpha)$.  Let us denote the bulk of $B_\alpha$ as
$$B(\alpha,s_N)^\delta:=\{ \beta\in B(\alpha,s_N):\, \|\beta-\gamma\| \ge \delta s_N,\,\text{for all}\;\gamma\in B(\alpha, s_N)^{\mathrm c}\}.$$
Observe that $\alpha\in B(\alpha,s_N)^\delta$.  Hence using Theorem~\ref{thm:noemi},
\begin{align*}
&E_2= \overline G_{U}(\alpha,\alpha)- G_U(\alpha,\alpha)\\
&\le \overline G_{ B_\alpha}(\alpha,\alpha)- G_{B_\alpha}(\alpha,\alpha)\le \sup_{\gamma \in B(\alpha,s_N)^\delta}\left|\overline G_{ B(\alpha,s_N)}(\alpha,\gamma)- G_{B(\alpha,s_N)}(\alpha,\gamma)\right|\\
&=\sup_{\gamma \in B(0,\,s_N)^\delta}\left|\overline G_{ B(0,\,s_N)}(0,\,\gamma)- G_{B(0,\,s_N)}(0,\,\gamma)\right|\le C(d,\,\delta) s_N^{4-d}.
\end{align*}
Recalling \eqref{eq:chiarini} we have just proved that $\var{\mu_\alpha}\le C(d,\,\delta)s_N^{4-d}$. Since $s_N=(\log N)^T$ with $T>(2+\theta)/(d-4)$,  Condition~(\ref{item:last})  follows.

{\bf Condition~(\ref{item:last}) under $\bprob_N$}: the proof is very similar to the argument under $\bprob$ and hence we briefly sketch it. Note that $\var{\mu_\alpha}= G_N(\alpha,\alpha)- G_{B_\alpha}(\alpha,\alpha)$. As before from Theorem~\ref{thm:noemi} it follows that $\left|G_N(\alpha,\alpha)- \overline G_N(\alpha,\alpha)\right|\le c_d N^{4-d}$ and $\left|G_{B_\alpha}(\alpha,\alpha)-\overline G_{B_\alpha}(\alpha,\alpha)\right|\le c_d s_N^{4-d}$, therefore it is enough to bound $\left|\overline G_N(\alpha,\alpha)- \overline G_{B_\alpha}(\alpha,\alpha)\right|$. Note that $\tau_{B_\alpha}\le \tau_{V_N}$, $\mathrm P_\alpha$-almost surely. Hence we have that
\begin{align*}
\left|\overline G_N(\alpha,\alpha)- \overline G_{B_\alpha}(\alpha,\alpha)\right|&=\left|\sum_{m\ge 0}(m+1)\mathrm P_\alpha[ S_m=\alpha, m< \tau_{V_N}]-\sum_{m\ge 0}(m+1) \mathrm P[ S_m=\alpha, \,m<\tau_{B_\alpha}]\right|\\
&=\sum_{m\ge 0} (m+1) \mathrm P_\alpha[ S_m=\alpha,\, \tau_{B_\alpha}\le m< \tau_{V_N}]\\
&=\Ex_\alpha\left[ \left(\sum_{m=0}^{+\infty} (m+ \tau_{B_\alpha}+1)\one_{\left\{S_m= \alpha, \,m < \tau_{V_N}\right\}}\right)\circ \Theta_{\tau_{B_\alpha}}\right]\\
&\stackrel{MP}{=}\sum_{\gamma\in \partial B_\alpha} \mathrm P_\alpha[ S_{ \tau_{B_\alpha}}=\gamma]\overline G_N(\gamma,\alpha)+ \sum_{\gamma\in \partial B_\alpha}\Ex_\alpha\left[ \tau_{B_\alpha} \one_{\left\{S_{\tau_{B_\alpha}}=\gamma\right\}}\right] \Gamma_N(\gamma,\alpha)\\
&\le \sup_{\gamma\in \partial B_\alpha} \overline G_N(\gamma,\alpha)+\sup_{\gamma\in \partial B_\alpha}\Gamma(\gamma, \alpha) \Ex_\alpha\left[ \tau_{B_\alpha}\right]\\
&\le C_d s_N^{4-d}.
\end{align*}
In the last line we have used previous estimates to conclude that $\Ex_\alpha[ \tau_{B_\alpha}]\le c s_N^{2}$.
\end{proof}
\begin{proof}[Proof of Proposition~\ref{prop: MP} (b)]
Recall $K=V_N\setminus B_\alpha$, $U=\Z^d\setminus K$ in what follows. First observe that as in proof of Theorem~\ref{thm:zero} it follows that $\mu_\alpha$ is $\mathcal F_K$-measurable and hence is independent of $\psi_\alpha=\vr_\alpha-\mathbf E[ \vr_\alpha|\mathcal F_K]$.

To show the next part of the lemma, we follow the proof of Lemma~1.2 of \cite{PFASS} and hence we need to show that
\begin{equation}\label{eq:MP:main identity}
\mathbf E\left[\one_A( (\psi_\alpha)_{\alpha\in U})\right]=\mathbf E_{U}\left[\one_A((\vr_\alpha)_{\alpha\in U})\right] \qquad \forall \, A\in \mathcal A_U
\end{equation}
where $\mathcal A_U$ is the canonical $\sigma$-algebra on $\R^U$. Let $(\alpha_i)_{0\le i\le n}\in U$ and $V$ be a finite set such that $K\cup \{\alpha_0,\,\ldots,\, \alpha_k\}\subset V\subset \Z^d$. Now by Dynkin's $\pi-\lambda$ theorem it suffices to assume that for $A$ of the form
$$A= A_{\alpha_0}\times \cdots \times A_{\alpha_n}\times \R^{U\setminus \{\alpha_0,\,\ldots,\, \alpha_n\}}$$
where $k\ge 0$ and $A_{\alpha_i}\in \mathcal B(\R)$, $i=0, 1,\,\ldots,\, k$. Consider on $\R^{V}$ the law of the finite membrane model, that is, the centered Gaussian field with covariance $G_{V}(\alpha,\beta)$. We indicate the law by $\bprob_V$ and its canonical process by $\left\{\vr_x^V\right\}_{x\in V}$. Now we define $\mu_\alpha^V=\mathbf E\left[\vr_\alpha^V|\mathcal F^V_K\right]$ where $\mathcal F^V_K=\sigma(\vr_\beta^V: \beta\in K)$ and also denote $\psi_\alpha^V=\vr_\alpha^V-\mu_\alpha^V$. Finally, we claim \eqref{eq:MP:main identity} holds because of the following statement:
\begin{claim}[DLR equation for an infinite set]\label{claim:convergence}
As $V\uparrow \Z^d$ we have
\eq{}\label{eq:sec}\mathbf E_{V\setminus K}[ \one_{A^V}((\vr_\alpha)_{\alpha\in V\setminus K})]\to \mathbf E_U\left[ \one_A((\vr_\alpha)_{\alpha\in U})\right] \eeq{}
and
\eq{}\label{eq:first}\mathbf E_V\left[ \one_{A^V}\left( \left(\psi_\alpha^V\right)_{\alpha\in V\setminus K}\right)\right]\to \mathbf E\left[ \one_A( (\psi_\alpha)_{\alpha\in U})\right]\eeq{}
where $A^V= A_{\alpha_0}\times \cdots \times A_{\alpha_n}\times \R^{V\setminus K\cup\{\alpha_0,\,\ldots,\, \alpha_n\}}.$ In particular, the limiting law $\bprob_U$ exists.
\end{claim}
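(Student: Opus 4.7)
The plan is to notice that both statements~\eqref{eq:sec} and~\eqref{eq:first} concern weak convergence of finite-dimensional marginals of \emph{centered Gaussian} vectors (the projections onto $\{\alpha_0,\dots,\alpha_n\}\subset U$). For such vectors weak convergence is equivalent to convergence of covariance matrices, so the whole task reduces to identifying limits of certain covariances as $V\uparrow \Z^d$ and then stitching the two statements together through the finite-volume Markov property.

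First, to prove~\eqref{eq:sec}, I would show that $G_{V\setminus K}(\alpha_i,\alpha_j)$ converges entrywise to some kernel $G_U(\alpha_i,\alpha_j)$ and then \emph{define} $\bprob_U$ as the unique centered Gaussian law on $\R^U$ whose covariance is the resulting (manifestly consistent) kernel. For the convergence, observe that the barred covariance
\[
\overline G_{V\setminus K}(\alpha,\beta)=\Ex_\alpha\!\left[\sum_{m=0}^{\tau_{V\setminus K}-1}(m+1)\one_{\{S_m=\beta\}}\right]
\]
is monotone non-decreasing in $V$ (since $\tau_{V\setminus K}$ is) and dominated by $G(\alpha,\beta)<+\infty$ in $d\ge 5$, so by monotone convergence $\overline G_{V\setminus K}\uparrow\overline G_U$, where $\overline G_U$ is defined with $\tau_U$ in place of $\tau_{V\setminus K}$. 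Passing from $\overline G$ to $G$ is done via the discrete potential-theoretic estimates of Theorem~\ref{thm:noemi} (more precisely, via its proof in~\cite{Kurt_thesis}): the boundary of $V\setminus K$ splits into an outer piece $\partial V$, whose contribution vanishes by the decay of $G$ and $\Gamma$ at infinity---the same computation already used in the verification of Condition~\eqref{item:less} above---and a \emph{fixed} inner piece $\partial K$, whose contribution stabilizes as $V\uparrow\Z^d$. This yields convergence $G_{V\setminus K}(\alpha_i,\alpha_j)\to G_U(\alpha_i,\alpha_j)$, and hence~\eqref{eq:sec}.

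Second, once~\eqref{eq:sec} is in hand,~\eqref{eq:first} is short. The finite-volume Markov property (Proposition~\ref{prop: MP}(a)) applied to $V$ with subset $K\subset V$ tells us that the joint law of $(\psi_\alpha^V)_{\alpha\in V\setminus K}$ under $\bprob_V$ is exactly $\bprob_{V\setminus K}$, so the left-hand sides of~\eqref{eq:sec} and~\eqref{eq:first} coincide for every $V$. It follows that the LHS of~\eqref{eq:first} also converges to $\mathbf E_U[\one_A((\vr_\alpha)_{\alpha\in U})]$. To close the loop and identify this with $\mathbf E[\one_A((\psi_\alpha)_{\alpha\in U})]$ I would use the explicit formula $\psi_\alpha^V=\vr_\alpha^V-\sum_{\gamma\in K}T_V(\alpha,\gamma)\vr_\gamma^V$ with $T_V(\alpha,\gamma)=\sum_{\xi\in K}G_V(\alpha,\xi)G_{V,K}^{-1}(\xi,\gamma)$: since $K$ is finite, continuity of matrix inversion together with $G_V(\alpha,\xi)\to G(\alpha,\xi)$ on $K$ gives $T_V\to T$ entrywise, and combined with the known weak convergence $\bprob_V\to\bprob$ on $\R^{\Z^d}$ in $d\ge 5$, this yields joint convergence in law of $(\psi_{\alpha_0}^V,\ldots,\psi_{\alpha_n}^V)$ to $(\psi_{\alpha_0},\ldots,\psi_{\alpha_n})$.

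The main obstacle sits in the first step. Unlike the DGFF, the MM covariance is the inverse of the discrete Bilaplacian and does \emph{not} admit a clean hitting-time representation on arbitrary subdomains, so monotonicity and direct random walk arguments apply cleanly only to the barred quantity $\overline G$. Bridging $\overline G_{V\setminus K}$ and $G_{V\setminus K}$ along $V\uparrow\Z^d$ therefore requires carefully tracking the contributions from the two disjoint pieces of $\partial(V\setminus K)$ and showing that the outer part contributes negligibly while the fixed inner part produces only a controlled finite correction---exactly the type of boundary decomposition already performed in the verification of Condition~\eqref{item:less}.
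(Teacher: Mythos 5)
Your second step (for \eqref{eq:first}) is fine and essentially what the paper has in mind when it says \eqref{eq:first} ``follows similarly'': the finite-volume Markov property makes the left-hand sides of \eqref{eq:sec} and \eqref{eq:first} equal for every $V$, and since $K$ is finite the coefficients $T_V(\alpha,\gamma)$ converge to $T(\alpha,\gamma)$, so the limit can be identified with the law of $(\psi_\alpha)_{\alpha\in U}$ under $\bprob$. The problem is your first step, which carries the whole claim. You reduce \eqref{eq:sec} to convergence of $G_{V\setminus K}(\alpha_i,\alpha_j)$, prove monotone convergence only for the barred quantity $\overline G_{V\setminus K}$, and then propose to bridge $\overline G_{V\setminus K}$ and $G_{V\setminus K}$ via Theorem~\ref{thm:noemi}. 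That theorem (Kurt's estimate) is stated and proved for cubes $V_N$ and for points in the bulk $V_N^\delta$, i.e.\ at distance of order $N^{1/d}$ from the \emph{entire} boundary. Here the domain $V\setminus K$ is not a cube and, crucially, the points $\alpha_0,\dots,\alpha_n$ lie at a fixed, bounded distance from the inner boundary $\partial K$, which does not recede as $V\uparrow\Z^d$. Hence $G_{V\setminus K}-\overline G_{V\setminus K}$ is an order-one quantity near $K$, and proving that it ``stabilizes'' is essentially the original problem. Nor can you decompose $G_{V\setminus K}$ itself along $\partial(V\setminus K)$: as the paper stresses, the finite-volume membrane covariance has no random-walk representation, and Proposition~\ref{prop:G_decomposition} only relates the infinite-volume $G$ to barred Green functions. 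In the verification of Condition~\eqref{item:less} the barred-to-unbarred bridge was exactly Theorem~\ref{thm:noemi}, usable there only because the relevant points were in the bulk of a cube; pointing back to that computation does not supply the missing estimate here. So the assertion that ``the fixed inner part produces only a controlled finite correction'' is a genuine gap, not a routine adaptation.

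The paper circumvents this entirely: it never compares $\overline G$ and $G$ at this stage. It obtains tightness of $\{\bprob_{V\setminus K}\}_{V\Subset\Z^d}$ from \citet[Proposition 2.1]{Giacomin} together with the uniform variance bound coming from monotonicity (Lemma~\ref{lemma:monotonicity_variances}), and then invokes \citet[Theorems 13.24, 13.26]{Georgii} on Gaussian Gibbs measures to get pointwise convergence of $G_{V\setminus K}$ directly; the limit $G_U$ is identified as the solution of the discrete Bilaplacian Dirichlet problem on $U$, which yields existence of $\bprob_U$ and \eqref{eq:sec}. If you want to keep your random-walk route, you would need a new estimate on $G_A-\overline G_A$ valid up to a fixed inner boundary (uniformly in the outer volume), which is not provided by any result quoted in the paper.
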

Before we embark on showing the Claim, let us clarify its meaning and why it implies~\eqref{eq:MP:main identity} (which is nothing but the DLR equation for an infinite set $U$, see \citet[Section 6.2.1]{veleniknotes}). Since $U$ is infinite, this is, a priori, a non-trivial statement. Recall that, due to Proposition~\ref{prop: MP} (a), for a finite volume $V$
\eq{}\label{eq:DLRfin}
\mathbf E_{V\setminus K}[ \one_{A^V}((\vr_\alpha)_{\alpha\in V\setminus K})]=\mathbf E_V\left[ \one_{A^V}\left( \left(\psi_\alpha^V\right)_{\alpha\in V\setminus K}\right)\right].
\eeq{}
We are now letting $V\uparrow \Z^d$ on both sides of \eqref{eq:DLRfin}, so that:
\begin{center}
\begin{tikzpicture}
  \matrix (m) [matrix of math nodes,row sep=3em,column sep=4em,minimum width=2em]
  {
  \mathbf E_{V\setminus K}[ \one_{A^V}((\vr_\alpha)_{\alpha\in V\setminus K})]& \mathbf E_V\left[ \one_{A^V}\left( \left(\psi_\alpha^V\right)_{\alpha\in V\setminus K}\right)\right] \\
    \mathbf E_U\left[ \one_A((\vr_\alpha)_{\alpha\in U})\right]& \mathbf E\left[ \one_A( (\psi_\alpha)_{\alpha\in U})\right] \\};
  \path[-stealth]
    (m-1-1) edge node [left] {\eqref{eq:sec}} (m-2-1)
            edge [draw=none] node [below] {\eqref{eq:DLRfin}} node[sloped,auto=false] {$=$} (m-1-2)
    (m-2-1.east|-m-2-2) edge[draw=none] node [below] {\eqref{eq:MP:main identity}}
            node[sloped,auto=false] {$=$} node [above] {!}(m-2-2)
    (m-1-2) edge node [right] {\eqref{eq:first}} (m-2-2)
            ;
\end{tikzpicture}
\end{center}
(where ``!'' indicates the equality we must still show).
This completes the proof of \eqref{eq:MP:main identity} and hence the proof of Proposition~\ref{prop: MP} (b).
\end{proof}

\begin{proof}[Proof of Claim~\ref{claim:convergence}] We discuss only \eqref{eq:sec}, since \eqref{eq:first} follows similarly.
We use \citet[Proposition 2.1]{Giacomin} and the uniform bound on $G_{V\setminus K}(\alpha,\,\alpha)$, $\alpha\in \Z^d$ to obtain that the measures $\{\bprob_{V\setminus K} \}_{V\Subset \Z^d}$ are tight. Therefore it suffices to show that $G_{V\setminus K}$ converges pointwise to a bounded limit. An application of \citet[Theorems 13.24, 13.26]{Georgii} yields that
\[G_U(\alpha,\,\beta):=\lim_{V\uparrow\Z^d}G_{V\setminus K}(\alpha,\,\beta)
\]
exists. One sees that $G_U$ satisfies the following boundary value problem: for $\alpha\in U$
\[
\left\{\begin{array}{lr}
\Delta^2 G_U(\alpha,\beta)=\delta(\alpha,\, \beta),& \beta \in U\nonumber \\
G_U(\alpha,\beta)=0, & \beta \in \partial_2 U.\nonumber
\end{array}\right.
\]
This shows that the centered Gaussian random measure with covariance $G_U$ exists and is the weak limit of $\bprob_{V\setminus K}$ as $V\uparrow \Z^d$. In particular \eqref{eq:sec} holds true.
\end{proof}


\subsection{Proof of assumptions for massive DGFF}
We introduced the massive discrete Gaussian free field in Subsection~\ref{subsec:intro:massive}. Here we briefly point out the how the assumptions of Theorem~\ref{thm:zero} are satisfied for this model. We denote by $\bprob_{\vartheta, N}$ the law of the massive free field with zero boundary conditions outside $V_N$, namely $\vr=(\vr_\alpha)_{\alpha\in\Z^d}$ under $\bprob_{\vartheta, N}$ is the centered Gaussian field with covariance structure
\[
\mathbf E_{\vartheta, N}\left[ \vr_\alpha\vr_\beta\right]= g_{\vartheta, V_N^{\mathrm c}}(\alpha,\beta)
\]
where recall $g_{\vartheta, V_N^{\mathrm c}}(\alpha,\beta)=\sum_{m\ge 0} (1-\vartheta)^m \prob^\alpha_0\left[ S_m=\beta, \,m< H_{V_N^{\mathrm c}}\right]$. Hence using the representation it follows that $g_{\vartheta, V_N^{\mathrm c}}(\alpha,\beta)\to g_\vartheta(\alpha,\beta)$ as $V_N\uparrow \Z^d$.  Consequently $\bprob_{\vartheta,N}\to \bprob_\vartheta$. For detailed proofs of the above facts see \citet[Section 8.5]{veleniknotes}.

To verify the assumptions the decay of the covariance is needed.
\begin{lemma}[{\citet[Theorem 7.48]{veleniknotes}}]\label{lemma:cov:massive}
There exists a constant $c(\vartheta)>0$ such that for $\alpha\neq \beta\in \Z^d$,
$$\exp\left(-c(\vartheta)\|\alpha-\beta\|\right)\le g_\vartheta(\alpha,\beta)\le \frac1{\vartheta}\exp\left(-c(\vartheta)\|\alpha-\beta\|\right).$$
\end{lemma}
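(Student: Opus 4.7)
The plan is to exploit the random walk representation
\[
g_\vartheta(\alpha,\beta) \;=\; \sum_{n\ge 0} (1-\vartheta)^n\, p_n(\alpha,\beta),
\]
where $p_n(\alpha,\beta)$ is the $n$-step transition probability of the simple random walk on $\Z^d$, and to read the exponential decay off the geometric factor $(1-\vartheta)^n$.

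For the upper bound, I would use that the walk cannot reach $\beta$ from $\alpha$ in fewer than $\|\alpha-\beta\|_1$ nearest-neighbour steps, so the series starts at $n=\|\alpha-\beta\|_1$. Bounding $p_n\le 1$ and summing the geometric tail gives $g_\vartheta(\alpha,\beta)\le (1-\vartheta)^{\|\alpha-\beta\|_1}/\vartheta$. Writing $1-\vartheta=\mathrm{e}^{-c_1(\vartheta)}$ with $c_1(\vartheta)>0$ and using the norm equivalence $\|\cdot\|_1\ge\|\cdot\|$ on $\Z^d$ then produces the right-hand inequality.

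For the lower bound, I would fix an explicit nearest-neighbour path $\alpha=\gamma_0,\gamma_1,\dots,\gamma_N=\beta$ of minimal length $N=\|\alpha-\beta\|_1$ (adjusting one coordinate at a time), whose probability under the simple random walk is exactly $(2d)^{-N}$. Keeping only the $n=N$ summand in the representation yields $g_\vartheta(\alpha,\beta)\ge\bigl((1-\vartheta)/(2d)\bigr)^{\|\alpha-\beta\|_1}$, which is an exponential lower bound after another application of norm equivalence.

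The main obstacle is that these two naive estimates do \emph{not} share the same exponent: the upper-bound rate is $-\log(1-\vartheta)$, whereas the lower-bound rate is the larger $-\log(1-\vartheta)+\log(2d)$. To enforce a single constant $c(\vartheta)$ as in the statement, one has to identify the true inverse correlation length of the killed walk. Two standard routes are available: (i) a Fekete/sub-multiplicativity argument applied to the hitting generating function $h(\alpha,\beta)=\mathrm{E}^\alpha_0[(1-\vartheta)^{H_\beta}]$, combined with the identity $g_\vartheta(\alpha,\beta)=h(\alpha,\beta)\,g_\vartheta(\beta,\beta)$, which reduces both bounds to controlling one sub-additive quantity $-\log h$; or (ii) Fourier analysis of $\hat g_\vartheta(\xi)=\bigl(1-(1-\vartheta)\hat p(\xi)\bigr)^{-1}$, locating its singularities in the complex $\xi$-plane and deforming contours to obtain Ornstein--Zernike bounds with a common rate. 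Since Lemma~\ref{lemma:cov:massive} is only invoked later to conclude that covariances decay exponentially, the crude pair of estimates above already suffices for checking the assumptions of Theorem~\ref{thm:zero}.
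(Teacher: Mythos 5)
The paper offers no proof of this lemma at all --- it is imported verbatim from \citet[Theorem~7.48]{veleniknotes} --- so your self-contained random-walk argument is by construction a different route, and it is worth comparing. Your two elementary estimates are both correct: the series $g_\vartheta(\alpha,\beta)=\sum_{n\ge\|\alpha-\beta\|_1}(1-\vartheta)^n p_n(\alpha,\beta)$ indeed starts at $n=\|\alpha-\beta\|_1$, giving the upper bound $(1-\vartheta)^{\|\alpha-\beta\|_1}/\vartheta$, and retaining the single minimal-path term gives the lower bound $\bigl((1-\vartheta)/(2d)\bigr)^{\|\alpha-\beta\|_1}$; the norm conversions are also fine. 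You are right, and commendably explicit, that these two rates do not match, so this does not literally prove the statement with a single $c(\vartheta)$; your proposed repairs (supermultiplicativity of the hitting generating function plus Fekete, or contour deformation of $\hat g_\vartheta$) are the standard ones and are essentially what the cited reference does. One caution: with the exact prefactors in the statement (prefactor $1$ on the left, $1/\vartheta$ on the right) and the \emph{same} rate $c$ on both sides, the bound is delicate, since Ornstein--Zernike asymptotics carry a polynomial correction $\|\alpha-\beta\|^{-(d-1)/2}$ that defeats a unit-prefactor lower bound at the true inverse correlation length; a two-sided exponential bound with possibly different constants is the robust formulation. This is harmless here: your closing observation is correct that the paper only ever invokes the \emph{upper} bound of this lemma (in \eqref{eq:omer} and \eqref{eq:nick}), so your crude pair of estimates already suffices for verifying Assumptions~\eqref{item:decreasing}--\eqref{item:last}, and your elementary argument has the advantage of being fully self-contained where the paper relies on an external black box.
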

%
Note that this implies that the covariance decays exponentially to $0$ and hence Assumption~\eqref{item:decreasing} follows. To show the next assumption we take $\alpha,\beta\in V_N^\delta$. We observe that by~\eqref{eq:massiveMP:green} and Lemma~\ref{lemma:cov:massive} we have
\begin{align}
g_{\vartheta, V_N^\mathrm{c}}(\alpha,\beta)&= g_\vartheta(\alpha,\beta)-\sum_{\gamma\in \partial V_N}\mathrm{P}^\alpha_\vartheta\left[ H_{V_N^\mathrm{c}}<+\infty, S_{H_{V_N^\mathrm{c}}}=\gamma\right] g_\vartheta(\gamma,\beta)\nonumber\\
&\ge g_\vartheta(\alpha,\beta)-\sup_{\gamma\in \partial V_N} g_\vartheta(\gamma,\beta)
\ge g_\vartheta(\alpha,\beta)-\frac1{\vartheta} \exp\left(-c(\vartheta,\,\delta) N^{1/d}\right).\label{eq:omer}
\end{align}
Hence Assumption~\eqref{item:less} immediately follows.
Let $K=V_N\setminus B_\alpha$ as in Assumption~\eqref{item:last}. First note that by the Green's function decomposition we have for $\beta\in K$
\begin{align*}
g_\vartheta(\alpha,\beta)&=g_{\vartheta, K}(\alpha, \beta)+\sum_{\gamma\in K} \mathrm{P}^\alpha_\vartheta[ H_K<+\infty, S_{H_K}=\gamma] g_\vartheta(\gamma, \beta)\\
&=\sum_{\gamma\in K} \mathrm{P}^\alpha_\vartheta[ H_K<+\infty, S_{H_K}=\gamma] g_\vartheta(\gamma, \beta)
\end{align*}
where we have used the fact that $\alpha\notin K$ and $\beta\in K$ implies $g_{\vartheta, K}(\alpha, \beta)=0$. Using the above identity we have
\begin{align}
\var{\mu_\alpha}&=\sum_{\beta,\gamma\in K} \mathrm{P}^\alpha_\vartheta[ H_K<+\infty, S_{H_K}=\beta]\mathrm{P}^\alpha_\vartheta[ H_K<\infty, S_{H_K}=\gamma]g_\vartheta(\beta, \gamma)\nonumber\\
&=\sum_{\beta\in K} \mathrm{P}^\alpha_\vartheta[ H_K<+\infty, S_{H_K}=\beta] g_\vartheta(\alpha, \beta)\nonumber\\
&\le \sup_{\beta\in K} g_\vartheta(\alpha, \beta)\le \sup_{\beta\in K} \frac1{\vartheta} \e^{-c(\vartheta) \|\alpha-\beta\|}\le C\e^{-c(\vartheta) s_N}.\label{eq:nick}
\end{align}
Now note that taking $s_N=\log N$ we have $(\log N)^{2+\theta}\var{\mu_\alpha}\le (\log N)^{2+\theta}N^{-c(\vartheta)}\to 0$ for any $\theta>0$ and uniformly for all $\alpha\in V_N^\delta$. Hence Assumption~\eqref{item:last} follows.

\subsection{Proof of assumptions for fractional fields}
In this subsection we point out how the fractional field described in Subsection \ref{subsec:DFGF} satisfies the assumptions in Theorem~\ref{thm:zero}. Looking at the representation in \eqref{eq:dfgf:greenrep} it follows that $G_{s,V_N}(\alpha,\beta)\to G_s(\alpha, \beta)$ and hence $\bprob_{s, N}\to \bprob_s$. Also note that Assumption~\eqref{item:decreasing} follows from~\eqref{eq:dfgf:cov}. Let us prove Assumption~(\ref{item:less}). Since we have a lot of freedom in tuning parameters for the model, assume $s_N=(\log N)^{\frac{\xi}{d-s}}$, $\xi>2$, to fix ideas. We can then take $N$ large enough so that for all $\beta\in \Z^d$ and $\gamma$ such that $\|\beta-\gamma\|\ge s_N$, \eqref{eq:dfgf:cov} yields
\eq{}\label{eq:Vladas}
\frac12 w_{s,\,d}\|\beta-\gamma\|^{s-d}\le G_s(\beta,\,\gamma)\le \frac32w_{s,\,d}\|\beta-\gamma\|^{s-d}.
\eeq{}
With this choice, since $s_N$ is much smaller than $\delta N$, for all $\beta\in V_N^\delta$ and $\gamma\in \partial V_N$ the inequalities hold. Observe that by the decomposition of the Green's function \eqref{eq:green:MP} we have, for $\alpha,\beta\in V_N^\delta$,
\begin{align*}
G_{s, V_N}(\alpha,\beta)&=G_s(\alpha,\beta)-\sum_{\gamma\in \Z^d\setminus V_N} \prob_\alpha\left[ \tau_{V_N}<+\infty, S_{\tau_{V_N}}=\gamma\right]G_s(\gamma,\beta).
\end{align*}
Therefore analogous computations to \eqref{eq:omer} (with the appropriate changes due to the representation \eqref{eq:green:MP}) yield
$$
G_{s, V_N}(\alpha,\,\beta)\ge G_s(\alpha,\,\beta)-\sup_{\gamma\in \Z^d\setminus V_N}G_s(\gamma,\,\beta)\ge G_s(\alpha,\,\beta)-c(\delta) N^{\frac{s-d}{d}}.
$$
Moreover as in \eqref{eq:nick} one can see that
$$
\var{\mu_\alpha}\le \sup_{\beta \in K}G_s(\alpha,\,\beta)\le  \frac{3}{2}\sup_{\beta \in K}  w_{s,\,d}\|\beta-\alpha\|^{s-d}\le
\frac{3}{2} w_{s,\,d} s_N^{s-d}=\frac{3}{2} w_{s,\,d}(\log N)^{-\xi}.
$$
Any $\theta<\xi-2$ allows to say that $\lim_{N\to+\infty}(\log N)^{2+\theta}\var{\mu_\alpha}= 0$ uniformly for $\alpha\in V_N^\delta$. Hence the above arguments show that the fractional fields satisfy Assumptions~\eqref{item:less} and \eqref{item:last}.

\section*{Acknowledgements}
The first author's research was supported by RTG 1845. The last author's research was supported by Cumulative Professional Development Allowance from Ministry of Human Resource Development, Government of India, and Department of Science and Technology, Inspire funds. He also acknowledges the hospitality of WIAS Berlin where part of the present work was carried out. The authors would like to thank Noemi Kurt for clarifying us some details of the membrane model and Ofer Zeitouni for asking a question that led to the draft of the present paper. We thank two anonymous referees for their thorough review and highly appreciate the comments and 
suggestions which contributed to improving the quality of the publication.
\bibliographystyle{abbrvnat}
\bibliography{literatur}

\end{document}